\documentclass[11pt,a4paper,reqno]{amsart}

\usepackage[T1]{fontenc}
\usepackage[english]{babel}

\usepackage{mathtools}
\usepackage{amssymb}
\usepackage{libertinus}
\usepackage[cal=boondoxo,bb=ams]{mathalfa}
\usepackage{microtype}

\allowdisplaybreaks[2]
\usepackage[
a4paper,
left=30mm,
right=30mm,
top=27mm,
bottom=30mm,
headsep=8mm,
footskip=13mm,
heightrounded
]{geometry}

\usepackage{graphicx}
\usepackage{xcolor}
\usepackage{tikz}
\usepackage{pgfplots}

\usepgfplotslibrary{fillbetween}
\pgfplotsset{compat=1.18}

\usetikzlibrary{
	patterns,
	patterns.meta,
	arrows.meta,
	positioning,
	matrix
}

\usepackage{enumitem}
\usepackage{array}

\setlist[itemize]{
	leftmargin=2.1em,
	itemsep=0.25em,
	topsep=0.45em,
	parsep=0pt
}

\setlist[enumerate]{
	leftmargin=2.1em,
	itemsep=0.25em,
	topsep=0.45em,
	parsep=0pt
}

\numberwithin{equation}{section}

\theoremstyle{plain}
\newtheorem{theorem}{Theorem}[section]
\newtheorem{lemma}[theorem]{Lemma}
\newtheorem{prop}[theorem]{Proposition}
\newtheorem{coro}[theorem]{Corollary}

\theoremstyle{definition}

\theoremstyle{remark}
\newtheorem{remark}[theorem]{Remark}

\makeatletter

\def\@settitle{%
	\begin{center}
		\vspace*{-0.4em}
		{\normalfont\bfseries
			\fontsize{17}{21}\selectfont
			\@title\par}
		\vspace{0.7em}
	\end{center}
}

\renewcommand\section{%
	\@startsection{section}{1}{\z@}%
	{1.5\baselineskip plus 0.2\baselineskip minus 0.1\baselineskip}%
	{0.65\baselineskip}%
	{\normalfont\Large\bfseries}%
}

\renewcommand\subsection{%
	\@startsection{subsection}{2}{\z@}%
	{1.15\baselineskip plus 0.2\baselineskip minus 0.1\baselineskip}%
	{0.45\baselineskip}%
	{\normalfont\large\bfseries}%
}

\renewcommand\subsubsection{%
	\@startsection{subsubsection}{3}{\z@}%
	{0.9\baselineskip plus 0.15\baselineskip minus 0.1\baselineskip}%
	{0.35\baselineskip}%
	{\normalfont\normalsize\bfseries}%
}

\makeatother

\usepackage{etoolbox}

\makeatletter

\patchcmd{\@setauthors}
{\centering\footnotesize}
{\centering\normalsize}
{}
{\PackageWarning{Nakao-template}
	{Failed to patch the author font size}}

\patchcmd{\@setauthors}
{\MakeUppercase{\authors}}
{\authors}
{}
{\PackageWarning{Nakao-template}
	{Failed to patch the author capitalization}}

\patchcmd{\maketitle}
{\uppercasenonmath\shorttitle}
{}
{}
{\PackageWarning{Nakao-template}
	{Failed to patch the running-title formatting}}

\patchcmd{\maketitle}
{\@nx\MakeUppercase{\the\toks@}}
{\the\toks@}
{}
{\PackageWarning{Nakao-template}
	{Failed to patch the running-author formatting}}

\makeatother

\usepackage{hyperref}

\hypersetup{
	hidelinks,
	pdfencoding=auto
}

\usepackage[nameinlink,capitalise,noabbrev]{cleveref}

\newcommand{\ml}{\mathcal}
\newcommand{\mb}{\mathbb}

\DeclareMathOperator{\lin}{lin}
\DeclareMathOperator{\nlin}{nlin}

\title[Sharp lifespan estimates and a Huygens-type effect for one-dimensional Nakao's problem]
{Sharp lifespan estimates and a Huygens-type effect for one-dimensional Nakao's problem}

\author[W. Chen]{Wenhui Chen}

\address{
	School of Mathematics and Information Science,
	Guangzhou University,
	Guangzhou 510006,
	P. R. China
}

\email{wenhui.chen.math@gmail.com}

\keywords{
	Nakao's problem,
	weakly coupled systems,
	semilinear damped wave equation,
	semilinear wave equation,
	lifespan estimates}
\subjclass[2020]{
	Primary 35L71;
	Secondary 35L05, 35B44}
\date{}

\begin{document}

\begin{abstract}
We study the lifespan of small data solutions to the one-dimensional Nakao's problem, which weakly couples a semilinear damped wave equation and a semilinear wave equation. For compactly supported initial data in a natural energy and integrability class, we establish lower lifespan bounds. Under the standard integral positivity assumptions, these bounds match the known upper estimates in a large region of the $(p,q)$-plane, including every $p>1$ when $q\geqslant3$. We further exploit a Huygens-type cancellation effect. Namely, the condition $\int_{\mathbb{R}}v_1(x)\,\mathrm{d}x=0$ eliminates the constant interior profile of the homogeneous free wave and yields a strictly improved lower bound for the lifespan in a nonempty parameter region. The proof combines diffusion-type $L^m-L^r$ estimates for the damped component with the one-dimensional d'Alembert formula within a time-dependent continuation framework.
\end{abstract}

\maketitle

\section{Introduction}

In this paper, we study one-dimensional Nakao's problem, namely, the weakly coupled Cauchy problem
\begin{align}\label{Nakao-Problem}
	\begin{cases}
		u_{tt}-u_{xx}+u_t=|v|^p,&x\in\mb{R},\ t>0,\\
		v_{tt}-v_{xx}=|u|^q,&x\in\mb{R},\ t>0,\\
		(u,u_t)(0,x)=\varepsilon(u_0,u_1)(x),&x\in\mb{R},\\
		(v,v_t)(0,x)=\varepsilon(v_0,v_1)(x),&x\in\mb{R},
	\end{cases}
\end{align}
where $p,q>1$ and $\varepsilon>0$ measures the size of the initial data. The two components of \eqref{Nakao-Problem} have essentially different linear dynamics. The linear part of the $u$-equation is a frictionally damped wave equation and exhibits diffusion-type behavior for large time, whereas that of the $v$-equation is the free wave equation. In one space dimension, solutions to the homogeneous free wave equation do not generally decay in $L^\infty$. The nonlinear interaction therefore combines a diffusion-like mechanism with genuinely hyperbolic propagation, and its lifespan behavior cannot be inferred directly from the theory of either purely wave-like or purely diffusion-like systems.

Two natural benchmark models for \eqref{Nakao-Problem} are weakly coupled systems of semilinear wave equations and weakly coupled systems of semilinear damped wave equations. Their critical behavior is governed, respectively, by Strauss-type and Fujita-type mechanisms, with the latter reflecting the diffusion phenomenon for the damped wave equation. The global in-time existence, finite-time blow-up and lifespan theories for the wave-wave system have been extensively studied (see, for example, \cite{DelSanto-Georgiev-Mitidieri=1997,Kubo-Ohta=1999,Agemi-Kurokawa-Takamura=2000,Georgiev-Takamura-Zhou=2006,Ikeda-Sobajima-Wakasa=2019} and the references therein). For the corresponding damped-damped wave system, we refer to \cite{Sun-Wang=2007,Nishihara=2012,Nishihara-Wakasugi=2014,Chen-Dao=2023}. A further feature specific to one space dimension is that the leading free-wave profile, and consequently the lifespan scale, may depend on whether the spatial mean of the initial velocity vanishes (see, for example, \cite{Morisawa-Sasaki-Takamura=2023,Kido-Sasaki-Takamatsu-Takamura=2024}). This distinction motivates the cancellation condition considered later.

The system \eqref{Nakao-Problem}, proposed by Mitsuhiro Nakao and now commonly referred to as Nakao's problem, lies between these two benchmark models. Related initial-boundary value problems were studied in \cite{Nakao=2016,Nakao=2018}. For the Cauchy problem, the author of \cite{Wakasugi=2017} proved finite-time blow-up for all $p,q>1$ when $n=1$, under the standard positivity and compact support assumptions. Thus, the central issue in one space dimension is to determine the dependence of the lifespan $T_\varepsilon$ on $\varepsilon$. Upper lifespan estimates for Nakao's problem were subsequently obtained by the recent paper \cite{Chen-Reissig=2021} through an iteration argument with a slicing procedure.  The authors of \cite{Kita-Kusaba=2022} developed a unified test function approach and derived further upper estimates. On the existence side, small data global in-time solutions in two and three space dimensions have recently been established in \cite{Georgiev-Kita=2026,Chen-Global=2026}. Nevertheless, matching lower lifespan estimates for the original one-dimensional system \eqref{Nakao-Problem} were not available. For other related works on Nakao's problem, we refer the reader to \cite{Chen=2022,Palmieri-Takamura=2023,Liu=2026} for the derivative-type nonlinearities $(|v_t|^p,|u_t|^q)^{\mathrm{T}}$, to \cite{Li-Palmieri=2025,Li-Palmieri2=2025} for the time-dependent damping terms $+b(t)v_t$, and to \cite{Chen-Palmieri=2026} for the compact Lie group framework $\mb{G}$.

Our first main result establishes lower lifespan estimates throughout the full range $p,q>1$. When the standard positivity assumptions from the existing upper bound theory are additionally imposed, combining our lower bounds with the known upper estimates yields
\begin{align*}
	T_\varepsilon\approx\varepsilon^{-\frac{pq-1}{q+2}}\ \ \mbox{whenever}\ \ (p,q)\in \Omega_{\mathrm{Nakao},1},
\end{align*}
where
\begin{align*}
	\Omega_{\mathrm{Nakao},1}:=\left\{(p,q)\in(1,\infty)^2:\ q\geqslant3,\ \ \mbox{or}\ \ p\leqslant\frac{2q^2+q-1}{q(3-q)} \ \ \mbox{and}\ \  1<q<3\right\}.
\end{align*}
To the best of the author's knowledge, this appears to be the first matching lower and upper lifespan estimate for the original one-dimensional Nakao's problem \eqref{Nakao-Problem}. In the complementary region, we obtain a different explicit lower bound, although it does not in general match the currently available upper estimates.

Our second main result concerns the cancellation condition
\begin{align}\label{Huygens-Condition-Introduction}
	\int_{\mb{R}}v_1(x)\,\mathrm{d}x=0.
\end{align}
For compactly supported data, the d'Alembert formula shows that a nonzero spatial integral of $v_1$ produces a constant free-wave profile in the interior of the forward light cone. Although the classical Huygens principle does not hold in one space dimension, the condition \eqref{Huygens-Condition-Introduction} removes this interior profile and localizes the homogeneous free wave near the characteristic boundary. We refer to this cancellation-induced localization as a Huygens-type effect. This localization yields a strictly improved lower bound for the lifespan in a nonempty region of the $(p,q)$-plane. Since the presently available upper estimates require $\int_{\mb R}v_1(x)\,\mathrm dx>0$, the resulting Huygens-type estimate is an improved lower bound rather than a sharp two-sided estimate.

The proof is based on a decomposition of both components into their linear and nonlinear parts. Diffusion-type $L^m-L^r$ estimates are used for the damped component, whereas the pointwise structure of the free wave is captured by the one-dimensional d'Alembert formula. Suitable time-dependent profiles are introduced to capture the main linear contributions, while the remaining nonlinear terms are controlled by a continuation argument. Under the condition \eqref{Huygens-Condition-Introduction}, the localization of the homogeneous free wave changes the prescribed wave profile and leads to the improved lifespan scale.

\medskip
\paragraph{Notation.}
Throughout this paper, $C>0$ denotes a generic constant which may change from line to line. The constants $c,c_0,\varepsilon_0$, once chosen, remain fixed. We write $f\lesssim g$ if there exists a constant $C>0$, independent of $t$, $T$ and $\varepsilon$, such that $f\leqslant Cg$. We write $f\approx g$ if both $f\lesssim g$ and $g\lesssim f$ hold. For a finite family of functions $f_1,\dots,f_k$ on $\mathbb R$, we write
\begin{align*}
\operatorname{supp}(f_1,\dots,f_k):=\bigcup_{j=1}^k\operatorname{supp}f_j.
\end{align*}
 Finally, we set $\langle t\rangle:=1+t$.

\section{Main results}\label{Section-Main-Results}

We denote by $T_\varepsilon$ the lifespan of the unique maximal mild energy solution to \eqref{Nakao-Problem} given by
Proposition~\ref{Prop-Local-Theory}. The constants appearing below may depend on $p$, $q$ and the fixed initial profiles, but are independent of $\varepsilon$.

\subsection{Generic lower bounds and sharp lifespan estimates}\label{Subsection-Generic-Lifespan}

For $1<q<3$, we define the transition curve
\begin{align*}
	P_*(q):=\frac{2q^2+q-1}{q(3-q)}.
\end{align*}
The generic lower lifespan bound changes its form at $p=P_*(q)$. Recall the region $\Omega_{\mathrm{Nakao},1}$ defined in the Introduction.

\begin{theorem}[Generic lifespan lower bound]\label{Thm-Lower-Lifespan}
	Let $p,q>1$ and suppose that
	\begin{align*}
		(u_0,u_1)&\in(H^1\cap L^1)\times(L^2\cap L^1\cap L^q),\\
		(v_0,v_1)&\in H^1\times(L^2\cap L^1).
	\end{align*}
Assume further that $\operatorname{supp}(u_0,u_1,v_0,v_1)\subset[-R,R]$ for some $R>0$. Then, there exist constants $\varepsilon_0>0$ and $c>0$, independent of $\varepsilon$, such that, for every $\varepsilon\in(0,\varepsilon_0]$, the following lifespan lower bound holds:
	\begin{align}\label{Lower-Lifespan-Main}
		T_\varepsilon\geqslant
		\begin{cases}
			\displaystyle c\,\varepsilon^{-\frac{pq-1}{q+2}}&\mbox{if}\ \ (p,q)\in\Omega_{\mathrm{Nakao},1}\\
			\displaystyle c\,\varepsilon^{-\frac{2q(pq-1)}{pq(3-q)+3q+1}}&\mbox{if}\ \ (p,q)\in
			(1,\infty)^2\setminus\Omega_{\mathrm{Nakao},1}.
		\end{cases}
	\end{align}
\end{theorem}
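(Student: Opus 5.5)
We prove \cref{Thm-Lower-Lifespan} by a continuation argument in a weighted sup-in-time space. The weights are chosen to encode the heuristic scaling of the two components. Write $u=\varepsilon u^{\lin}+u^{\nlin}$ and $v=\varepsilon v^{\lin}+v^{\nlin}$, where $u^{\lin}$ solves the homogeneous damped wave equation with data $(u_0,u_1)$ and $v^{\lin}$ is given by d'Alembert's formula with data $(v_0,v_1)$. The nonlinear parts are the Duhamel terms
\begin{align*}
u^{\nlin}(t)=\int_0^t E_{\mathrm{d}}(t-s)\ast|v(s)|^p\,\mathrm{d}s,\qquad v^{\nlin}(t,x)=\frac12\int_0^t\int_{x-(t-s)}^{x+(t-s)}|u(s,y)|^q\,\mathrm{d}y\,\mathrm{d}s .
\end{align*}
By finite propagation speed (Proposition~\ref{Prop-Local-Theory}), both $u$ and $v$ are supported in $\{|x|\leqslant t+R\}$. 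For the linear parts:
\begin{itemize}
\item the diffusion-type $L^m-L^r$ estimates give $\|u^{\lin}(t)\|_{L^r}\lesssim\langle t\rangle^{-\frac12(1-\frac1r)}$, using the $L^1\cap H^1$ and $L^1\cap L^2\cap L^q$ assumptions on the data;
\item d'Alembert gives $\|v^{\lin}(t)\|_{L^\infty}\lesssim1$ (here $v_1\in L^1$ is used).
\end{itemize}

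The key step is to choose time-dependent profiles $\Phi(t)$ and $\Psi(t)$ and to work with the quantity
\begin{align*}
X(T):=\sup_{0\leqslant t<T}\Bigl(\Phi(t)^{-1}\|u(t)\|_{L^\infty\cap L^1}+\Psi(t)^{-1}\|v(t)\|_{L^\infty}\Bigr),
\end{align*}
where the $L^1$ part of the $u$-norm carries its own appropriate weight. The goal is an a priori inequality of the form $X(T)\leqslant C\varepsilon+C\,G(T)\,X(T)^{pq}$, possibly with mixed powers $X^p$ and $X^q$. For the damped component we estimate $u^{\nlin}$ in $L^\infty$ and in $L^1$ via the $L^1\to L^\infty$ and $L^1\to L^1$ estimates applied to $|v|^p$, using $\||v(s)|^p\|_{L^1}\lesssim\langle s\rangle\|v(s)\|_{L^\infty}^p$ because of the support bound. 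For the wave component we use $\|v^{\nlin}(t)\|_{L^\infty}\leqslant\frac12\int_0^t\|u(s)\|_{L^q}^q\,\mathrm{d}s$ together with the interpolation $\|u\|_{L^q}^q\leqslant\|u\|_{L^\infty}^{q-1}\|u\|_{L^1}$.

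The heuristic is as follows. With $v\sim\varepsilon$ on the cone, the damped component satisfies $\|u^{\nlin}(t)\|_{L^\infty}\lesssim t\varepsilon^p$ on a set of width $t$. Hence $\int_0^t\|u^{\nlin}\|_{L^q}^q\,\mathrm{d}s\sim t^{q+2}\varepsilon^{pq}$, and requiring this to stay $\lesssim\varepsilon$ gives exactly $T\sim\varepsilon^{-\frac{pq-1}{q+2}}$. The competing contribution comes from the linear diffusive part, through terms of the form
\begin{align*}
\int_0^t\|\varepsilon u^{\lin}(s)\|_{L^\infty}^{q-1}\,\|u(s)\|_{L^1}\,\mathrm{d}s\sim\varepsilon^{q-1}\int_0^t\langle s\rangle^{-\frac{q-1}{2}}\|u(s)\|_{L^1}\,\mathrm{d}s .
\end{align*}
For $q\geqslant3$ (with $\log$ when $q=3$) this is harmless. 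For $1<q<3$ it grows like $t^{\frac{3-q}{2}}$ and feeds back through the $p$-power into $u^{\nlin}$.

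We therefore split the analysis at $p=P_*(q)$. Balancing the pure nonlinear term $t^{q+2}\varepsilon^{pq}$ against the cross term, which schematically is $\varepsilon^{q-1}\,t^{\frac{3-q}{2}}\cdot t^{2}\varepsilon^{p}$ after inserting the mass growth of $u$, determines which one saturates first. When $(p,q)\in\Omega_{\mathrm{Nakao},1}$ the pure nonlinear term dominates, and we take $\Psi\equiv1$ and $\Phi(t)=\langle t\rangle^{-1/2}+\varepsilon^{p-1}\langle t\rangle$. In the complement we modify $\Psi$ to grow like a power of $\langle t\rangle$ reflecting the cross term. Solving the resulting balance then yields the exponent $\frac{2q(pq-1)}{pq(3-q)+3q+1}$.

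Once the inequality $X(T)\leqslant C\varepsilon+C\,G(T)X(T)^{pq}$ is established with $G(T)\varepsilon^{pq-1}\leqslant\frac12$ for $T\leqslant c\,\varepsilon^{-\gamma}$, the standard continuation argument closes. Continuity of $T\mapsto X(T)$, which follows from the local theory, together with the blow-up alternative of Proposition~\ref{Prop-Local-Theory} gives $X(T)\leqslant2C\varepsilon$, and hence $T_\varepsilon\geqslant c\,\varepsilon^{-\gamma}$.

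The main obstacle is the bookkeeping of the mixed terms in the regime $1<q<3$, in particular the exponent arithmetic verifying that $P_*(q)$ is exactly the transition curve. A secondary obstacle is that $u_1\in L^q$ is needed to control the short-time, non-diffusive part of $u^{\lin}$ in $L^q$ for the integral $\int_0^t\|u\|_{L^q}^q\,\mathrm{d}s$ near $s=0$. I would first carry out the computation with pure power profiles and optimize the exponents, and only then verify the time integrals case by case, separating $q>3$, $q=3$ and $q<3$.
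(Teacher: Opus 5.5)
Your heuristic for the pure nonlinear scale $\varepsilon^{-\frac{pq-1}{q+2}}$ is right. However, there is a genuine gap in the mechanism you assign to the transition, and the scheme as set up cannot reach the stated exponents.

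\textbf{The cross term is real in your scheme and is too large.} Because $X(T)$ controls the full $u$ in $L^\infty\cap L^1$, and you interpolate $\|u\|_{L^q}^q\leqslant\|u\|_{L^\infty}^{q-1}\|u\|_{L^1}$ on the sum $\varepsilon u^{\lin}+u^{\nlin}$, the feedback into $v$ contains the mixed term $\|\varepsilon u^{\lin}\|_{L^\infty}^{q-1}\|u^{\nlin}\|_{L^1}\sim\varepsilon^{q-1}\langle s\rangle^{-\frac{q-1}{2}}\cdot\varepsilon^p\langle s\rangle^2$. This is exactly the cross term you single out. Its time integral $\varepsilon^{p+q-1}t^{\frac{7-q}{2}}$ reaches $\varepsilon$ at $t\sim\varepsilon^{-\frac{2(p+q-2)}{7-q}}$. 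Since $2(p+q-2)(q+2)-(pq-1)(7-q)=(q-1)[p(q-4)+2q+1]$, this time is shorter than $\varepsilon^{-\frac{pq-1}{q+2}}$ whenever $q<4$ and $p>\frac{2q+1}{4-q}$.
\begin{itemize}
\item At $(p,q)=(4,2)\in\Omega_{\mathrm{Nakao},1}$ your scheme only gives $\varepsilon^{-8/5}$ instead of $\varepsilon^{-7/4}$.
\item At $q=3$, $p>7$ it only gives $\varepsilon^{-\frac{p+1}{2}}$. So this term is not harmless for $q\geqslant3$ once the mass growth $\varepsilon^ps^2$ is inserted.
\item Balancing it against $\varepsilon^{pq}t^{q+2}$ gives the curve $p=\frac{2q+1}{4-q}$, not $P_*(q)$.
\item In the complement, the same term forces $t\lesssim\varepsilon^{-\frac{2(pq-1)}{4+p(3-q)}}$ even with the correct $v$-profile. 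This is strictly shorter than $\varepsilon^{-\frac{2q(pq-1)}{pq(3-q)+3q+1}}$.
\end{itemize}
So the complement exponent is asserted rather than derived, and the balance you describe does not produce it.

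\textbf{Inconsistent $u^{\nlin}$ estimate.} Your stated $L^1\to L^\infty$ bound gives $\|u^{\nlin}(t)\|_{L^\infty}\lesssim\varepsilon^pt^{3/2}$, not the $\varepsilon^pt$ of your heuristic. This loses a factor $t^{\frac{q-1}{2}}$ in $\|u^{\nlin}\|_{L^q}^q$. You need the $L^\infty\to L^\infty$ bound instead. It is valid in 1D because the kernel of $D(t)$ is nonnegative with mass $1-\mathrm{e}^{-t}$, but it lies outside the range of Lemma~\ref{Lemma-Damped-Wave} and must be justified separately.

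\textbf{The missing idea.} Split \emph{before} raising to the $q$th power. Using $|a+b|^q\lesssim|a|^q+|b|^q$, one gets $\|v(t)\|_{L^\infty}\lesssim\varepsilon+\varepsilon^qH_q(t)+\int_0^t\|u^{\nlin}(\tau)\|_{L^q}^q\,\mathrm{d}\tau$ with no mixed terms. From $L^q\to L^q$ boundedness and finite propagation, $\|u^{\nlin}(t)\|_{L^q}\lesssim\int_0^t\langle\tau\rangle^{\frac1q}\|v(\tau)\|_{L^\infty}^p\,\mathrm{d}\tau$.

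The transition then comes from the \emph{purely linear} term $\varepsilon^qH_q(t)$, which overtakes $\varepsilon$ at $T_0=\varepsilon^{-\frac{2(q-1)}{3-q}}$. The condition $p\leqslant P_*(q)$ is exactly $\frac{pq-1}{q+2}\leqslant\frac{2(q-1)}{3-q}$, i.e.\ the lifespan scale is reached before $T_0$.

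For $p>P_*(q)$ the $v$-profile must be $\varepsilon+\varepsilon^qH_q(t)$. This gives $\|u^{\nlin}(t)\|_{L^q}\lesssim\varepsilon^{pq}t^{1+\frac1q+\frac{p(3-q)}{2}}$ for $t\geqslant T_0$, and the feedback ratio $\varepsilon^{q(pq-1)}t^{q+2+\frac{3-q}{2}(pq-1)}$. The stated exponent follows once you also check that the $[0,T_0]$ contribution $\varepsilon^{pq-1}T_0^{q+2}$ tends to zero.

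\textbf{Closing the bootstrap.} With your $\varepsilon$-dependent weight $\Phi$, you only get $\Phi^{-1}\|u^{\nlin}\|_{L^\infty}\lesssim\varepsilon^{1-p}X^p$. This is of size $\varepsilon$ but not small, so the single inequality $X\leqslant C\varepsilon+CG(T)X^{pq}$ does not close as written. Bootstrap on $v$ alone and treat $u^{\nlin}$ as slaved to it, as the paper does through $\mathcal{Q}_\varepsilon(T)$.
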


We next compare Theorem~\ref{Thm-Lower-Lifespan} with the available upper lifespan bounds in \cite{Kita-Kusaba=2022} (see also \cite{Chen-Reissig=2021}). These upper bounds require the integral positivity conditions
\begin{align}\label{Positive-Data}
	\int_{\mb{R}}\big(u_0(x)+u_1(x)\big)\,\mathrm{d}x>0\ \ \mbox{as well as}\ \ \int_{\mb{R}}v_1(x)\,\mathrm{d}x>0.
\end{align}

\begin{coro}[Sharp lifespan estimate]\label{Cor-Sharp-Lifespan}
	Under the assumptions of Theorem~\ref{Thm-Lower-Lifespan}, suppose additionally that \eqref{Positive-Data} holds. Then, the following sharp lifespan estimate holds:
	\begin{align*}
		T_\varepsilon\approx\varepsilon^{-\frac{pq-1}{q+2}}\ \ \mbox{whenever}\ \ (p,q)\in\Omega_{\mathrm{Nakao},1}.
	\end{align*}
\end{coro}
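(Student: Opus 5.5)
The corollary follows by combining the lower bound of Theorem~\ref{Thm-Lower-Lifespan} with the upper lifespan estimates of \cite{Kita-Kusaba=2022} (see also \cite{Chen-Reissig=2021}). First, the assumptions of Theorem~\ref{Thm-Lower-Lifespan} are in force, so for $(p,q)\in\Omega_{\mathrm{Nakao},1}$ the first line of \eqref{Lower-Lifespan-Main} gives $T_\varepsilon\geqslant c\,\varepsilon^{-\frac{pq-1}{q+2}}$ for all $\varepsilon\in(0,\varepsilon_0]$. It therefore remains to establish a matching upper bound $T_\varepsilon\leqslant C\varepsilon^{-\frac{pq-1}{q+2}}$ on $\Omega_{\mathrm{Nakao},1}$ under \eqref{Positive-Data}.

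For the upper bound, I will invoke the test function results of \cite{Kita-Kusaba=2022}. Their hypotheses are compact support of the data, \eqref{Positive-Data}, and the notion of (energy or weak) solution. Our mild energy solution from Proposition~\ref{Prop-Local-Theory} with compactly supported data satisfies the weak formulation by finite propagation speed and a density argument, so their estimates apply. In one dimension these estimates take the form $T_\varepsilon\lesssim\varepsilon^{-\min\{\cdot\}}$, a minimum over several exponents arising from the different iteration or test-function regimes. The key step is to check that on $\Omega_{\mathrm{Nakao},1}$ this minimum equals $\frac{pq-1}{q+2}$.

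Heuristically, $\int v\,\mathrm{d}x\sim\varepsilon t$ from $\int v_1>0$. This gives $v\sim\varepsilon$ in the light cone $|x|\lesssim t$. Feeding this into the damped equation gives $\int u\,\mathrm{d}x\sim\varepsilon^p t^{2}$, so $|u|\sim\varepsilon^pt^{3/2}$ on the parabolic region $|x|\lesssim\sqrt t$. Then $\int|u|^q\sim\varepsilon^{pq}t^{(3q-1)/2}$, and iterating once more yields the balance $\varepsilon^{pq-1}t^{q+2}\sim1$, which is the exponent $\frac{pq-1}{q+2}$.

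The main obstacle is the bookkeeping comparison of exponents. One must show that whenever $q\geqslant3$, or $1<q<3$ and $p\leqslant P_*(q)$, the upper-bound exponent in \cite{Kita-Kusaba=2022} reduces to $\frac{pq-1}{q+2}$, possibly with no logarithmic loss on the boundary $p=P_*(q)$. I would do this by writing each competing exponent explicitly and comparing it with $\frac{pq-1}{q+2}$ by cross-multiplication. I expect the threshold to be exactly where the lower bounds switch, since there $\frac{pq-1}{q+2}=\frac{2q(pq-1)}{pq(3-q)+3q+1}$, which is equivalent to $pq(3-q)+3q+1=2q^2+4q$, that is, $p=P_*(q)$. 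This confirms that the region is consistent with the theorem. Given both bounds, $T_\varepsilon\approx\varepsilon^{-\frac{pq-1}{q+2}}$ on $\Omega_{\mathrm{Nakao},1}$.
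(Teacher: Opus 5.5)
Your outline has the same architecture as the paper's proof: the first line of \eqref{Lower-Lifespan-Main} gives the lower bound, and the test-function estimates of \cite{Kita-Kusaba=2022} give the upper bound. The gap is that the one step with real content, the exponent comparison, is not carried out, and the outcome you predict for it is wrong. The upper exponent does \emph{not} switch at $p=P_*(q)$. Your check that $\frac{pq-1}{q+2}=\frac{2q(pq-1)}{pq(3-q)+3q+1}$ iff $p=P_*(q)$ compares the two \emph{lower}-bound exponents of Theorem~\ref{Thm-Lower-Lifespan} with each other. It says nothing about the upper bound, so it confirms nothing.

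What you actually need is a containment. The upper estimate reads $T_\varepsilon\lesssim\varepsilon^{-1/F(p,q)}$ with
\[
F(p,q)=\max\Big\{\frac{q+1}{pq-1}-\frac12,\ \frac{q+2}{pq-1},\ \frac{p+1}{pq-1}-\frac12,\ \frac{2p+1}{pq-1}-1\Big\}.
\]
A further entry $\frac{2+p^{-1}}{pq-1}$ can be discarded, since it lies below $\frac{q+2}{pq-1}$ because $q>p^{-1}$. The second entry exceeds the first by $\frac1{pq-1}+\frac12$. It exceeds the third and fourth by $\frac{p(q-2)+2q+1}{2(pq-1)}$ and $\frac{p(q-2)+q}{pq-1}$. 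Both differences are nonnegative for every $p>1$ when $q\geqslant2$. For $1<q<2$, both are nonnegative as soon as $p\leqslant\frac{q}{2-q}$. Finally, $\frac{q}{2-q}-P_*(q)=\frac{(q-1)^2(q+2)}{q(3-q)(2-q)}>0$, so $T_\varepsilon\lesssim\varepsilon^{-\frac{pq-1}{q+2}}$ holds on all of $\Omega_{\mathrm{Nakao},1}$.

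In fact the upper exponent stays $\frac{pq-1}{q+2}$ on a strictly larger set, namely every $p$ once $q\geqslant2$. The boundary $p=P_*(q)$ comes from the lower bound alone. That is exactly why $\{1<q<3,\ p>P_*(q)\}$ is a gap rather than a second sharp regime. Had you tried to verify your expected coincidence of thresholds by cross-multiplication, you would have found it false.

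Your heuristic paragraph is also inconsistent. With $|u|\sim\varepsilon^pt^{3/2}$ on $|x|\lesssim\sqrt t$, one gets $\int|u|^q\,\mathrm{d}x\sim\varepsilon^{pq}t^{(3q+1)/2}$, not $t^{(3q-1)/2}$. This leads to the balance $\varepsilon^{pq-1}t^{3(q+1)/2}\sim1$, not $t^{q+2}$. The exponent $q+2$ arises because the source $|v|^p\sim\varepsilon^p$ fills the light cone. Hence $u\sim\varepsilon^pt$ on $|x|\lesssim t$ and $\int|u|^q\,\mathrm{d}x\sim\varepsilon^{pq}t^{q+1}$, which is what $\Psi_\varepsilon$ records. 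That paragraph is not load-bearing, but the exponent comparison is, and it has to be done as above.
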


Thus, the generic lower bound is sharp throughout $\Omega_{\mathrm{Nakao},1}$. When $1<q<3$ and $p>P_*(q)$, the lower bound is instead governed by the contribution of the linear damped wave profile, and it does not in general match the currently available upper bounds.

\subsection{Huygens-type improvement}\label{Subsection-Huygens-Improvement}

We now impose the additional cancellation condition \eqref{Huygens-Condition-Introduction}. As explained in the Introduction, this condition produces a Huygens-type localization of the homogeneous free wave and leads to an improved lower lifespan bound.

For $1<q<3$, let $P_{\mathrm{H}}(q)$ be the unique root in $(1,\infty)$ of
\begin{align}\label{PH-quadratic}
	q(3-q)p^2-(2q^2-q-1)p-2=0.
\end{align}
Equivalently,
\begin{align*}
	P_{\mathrm{H}}(q):=\frac{2q^2-q-1+\sqrt{(2q^2-q-1)^2+8q(3-q)}}{2q(3-q)}.
\end{align*}
For later comparison, we record that
\begin{align}\label{PH-Less-Pstar}
	1<P_{\mathrm{H}}(q)<P_*(q)\ \ \mbox{for}\ \ 1<q<3.
\end{align}
Indeed, let
\begin{align*}
	G_q(p):=q(3-q)p^2-(2q^2-q-1)p-2.
\end{align*}
Since $q(3-q)>0$ and the constant term of $G_q$ is negative, the polynomial $G_q$ has exactly one positive root, namely, $P_{\mathrm{H}}(q)$. Moreover,
\begin{align*}
	P_*(q)-1&=\frac{(q-1)(3q+1)}{q(3-q)}>0,\\
	G_q(1)&=-(q-1)(3q-1)<0,\\
    G_q\bigl(P_*(q)\bigr)&=\frac{4(q-1)(q+2)}{3-q}>0.
\end{align*}
Therefore, the unique positive root of $G_q$ lies strictly between $1$ and $P_*(q)$, which proves \eqref{PH-Less-Pstar}. The Huygens-type lower lifespan bound changes its form at $p=P_{\mathrm{H}}(q)$.

\begin{theorem}[Huygens-type improvement]\label{Thm-Huygens-Lifespan}
	Assume the hypotheses of Theorem~\ref{Thm-Lower-Lifespan} and suppose
	additionally that condition \eqref{Huygens-Condition-Introduction} holds. Then, there exist
	constants $\varepsilon_0>0$ and $c>0$, independent of $\varepsilon$, such that, for every $\varepsilon\in(0,\varepsilon_0]$, the following lifespan lower bound holds:
	\begin{align}\label{Huygens-Lifespan}
		T_\varepsilon\geqslant
		\begin{cases}
			\displaystyle c\,\varepsilon^{-\frac{pq(pq-1)}{pq(q+1)+1}}&\mbox{if}\ \  q\geqslant3,\ \ \mbox{or}\ \ 1<q<3\ \ \mbox{and}\ \ 1<p\leqslant P_{\mathrm{H}}(q),\\
			\displaystyle c\,\varepsilon^{-\frac{2q(pq-1)}{pq(3-q)+3q+1}}&\mbox{if}\ \ 1<q<3\ \ \mbox{and}\ \ p>P_{\mathrm{H}}(q).
		\end{cases}
	\end{align}
\end{theorem}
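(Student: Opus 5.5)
The plan is to rerun the continuation argument behind Theorem~\ref{Thm-Lower-Lifespan}. The one change is the prescribed profile for the linear free wave, which is where condition \eqref{Huygens-Condition-Introduction} enters.

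\textbf{Decomposition and linear bounds.} I would write $u=\varepsilon u^{\lin}+u^{\nlin}$ and $v=\varepsilon v^{\lin}+v^{\nlin}$. Here $u^{\lin}$ solves the linear damped wave equation with data $(u_0,u_1)$, and $v^{\lin}$ is given by d'Alembert's formula. The nonlinear parts are the Duhamel terms of $|v|^p$ (damped propagator) and of $|u|^q$ (free propagator). By d'Alembert,
\begin{align*}
v^{\lin}(t,x)=\tfrac12\big(v_0(x+t)+v_0(x-t)\big)+\tfrac12\int_{x-t}^{x+t}v_1(y)\,\mathrm{d}y .
\end{align*}
Under \eqref{Huygens-Condition-Introduction}, the integral vanishes whenever $[-R,R]\subset[x-t,x+t]$, that is, for $|x|\leqslant t-R$. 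It also vanishes for $|x|\geqslant t+R$. Hence $v^{\lin}(t,\cdot)$ is bounded and supported in the two strips $\big||x|-t\big|\leqslant R$, so
\begin{align*}
\|v^{\lin}(t)\|_{L^\infty}\lesssim 1\quad\text{and}\quad \||v^{\lin}(t)|^p\|_{L^1}\lesssim 1
\end{align*}
uniformly in $t$. In the generic case the second quantity grows like $\langle t\rangle$, so this is the only gain from the cancellation. For $u^{\lin}$ I would use the diffusion-type $L^m-L^r$ estimates, as in the proof of Theorem~\ref{Thm-Lower-Lifespan}. These give $\|u^{\lin}(t)\|_{L^r}\lesssim\langle t\rangle^{-\frac12(1-\frac1r)}$ from the $L^1$ data, with the $L^q$ assumption on $u_1$ used at $r=q$.

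\textbf{Bootstrap norm and a priori inequality.} I would define a solution norm on $[0,T]$ of the form
\begin{align*}
M_u(T)&=\sup_{t\leqslant T}\sum_{r\in\{1,2,q,\infty\}}\langle t\rangle^{\frac12(1-\frac1r)}\|u^{\nlin}(t)\|_{L^r},\\
M_v(T)&=\sup_{t\leqslant T}\omega(t)^{-1}\|v^{\nlin}(t)\|_{L^\infty},
\end{align*}
with support in $|x|\leqslant t+R$. The weight $\omega$ is the growth rate of $\int_0^t\|u(s)\|_{L^q}^q\,\mathrm{d}s$. It equals $\langle t\rangle^{(3-q)/2}$ for $q<3$, $\log\langle t\rangle$ for $q=3$, and $1$ for $q>3$. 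D'Alembert gives
\begin{align*}
|v^{\nlin}(t,x)|\leqslant \tfrac12\int_0^t\|u(s)\|_{L^q}^q\,\mathrm{d}s\lesssim \omega(t)\big(\varepsilon+M_u\big)^q .
\end{align*}
For $u^{\nlin}$ I would split the source as
\begin{align*}
|v|^p\lesssim \varepsilon^p|v^{\lin}|^p+|v^{\nlin}|^p .
\end{align*}
The first piece has $L^1$ norm $O(\varepsilon^p)$ by the Huygens localization. The second piece has $L^1$ norm at most $\langle s\rangle\,\omega(s)^p\big(\varepsilon+M_u\big)^{pq}$, and its $L^\infty$ norm is controlled by the same quantities. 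Feeding both into the $L^1\cap L^\infty\to L^r$ damped estimates and integrating in $s$ should give an inequality
\begin{align*}
M_u(T)\leqslant C\varepsilon^{p}A(T)+C\,B(T)\big(\varepsilon+M_u(T)\big)^{pq},
\end{align*}
with explicit powers $A(T)$, $B(T)$ of $T$. The standard continuation argument then requires $M_u\lesssim\varepsilon$ to be consistent. This forces $\varepsilon^{p-1}A(T)\ll 1$ and $\varepsilon^{pq-1}B(T)\ll1$, and solving these for $T$ gives the lifespan bound. The maximal mild energy solution of Proposition~\ref{Prop-Local-Theory} and a continuity argument turn this into the claim.

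\textbf{Case split.} The two constraints compete. The exponent $\frac{pq(pq-1)}{pq(q+1)+1}$ should come from balancing the localized linear-wave source against the self-interaction $u\to v^{\nlin}\to u$, with the localized source now entering at a lower power of $T$ than in the generic case. The second exponent $\frac{2q(pq-1)}{pq(3-q)+3q+1}$ is identical to the complementary case of Theorem~\ref{Thm-Lower-Lifespan}. It is driven by the linear damped profile $\varepsilon u^{\lin}$ feeding $v^{\nlin}$ with growth $t^{(3-q)/2}$, so for $p>P_{\mathrm{H}}(q)$ I would reuse that estimate verbatim. The threshold between the cases is where the two constraints give equal powers of $\varepsilon$. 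Equating $\frac{pq(pq-1)}{pq(q+1)+1}=\frac{2q(pq-1)}{pq(3-q)+3q+1}$ reduces to $q(3-q)p^2-(2q^2-q-1)p-2=0$, i.e.\ \eqref{PH-quadratic}. This confirms that the bookkeeping is organized correctly. For $q\geqslant3$ the weight $\omega$ is at most logarithmic, so only the first regime survives.

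\textbf{Main obstacle.} I expect the hard step to be the exponent bookkeeping in the first regime. The point is that the localized term has $\||v^{\lin}|^p\|_{L^1}=O(1)$ but $\|v^{\lin}\|_{L^\infty}=O(1)$. Its contribution to $\|u^{\nlin}(t)\|_{L^q}$, and then to $\int_0^T\|u\|_{L^q}^q$, must be computed with the correct mixed $L^1$/$L^\infty$ interpolation and time weights, without the losses the generic argument tolerates. I would first try placing the localized source purely in $L^1$ in the damped estimate. If that loses a power of $T$, I would introduce an intermediate $L^m$ and optimize over $m$.
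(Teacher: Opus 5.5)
\textbf{There is a genuine gap in how you close the argument.} You require $M_u(T)\lesssim\varepsilon$, i.e.\ that $u^{\nlin}$ stay below the diffusive decay of $\varepsilon u^{\lin}$ in every $L^r$, and this forces $\varepsilon^{p-1}A(T)\ll1$. That hypothesis is false on the relevant time scale.

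For $t>R$, $v^{\lin}$ is a fixed pair of travelling profiles under the zero-mean condition. So the localized source $\varepsilon^p|v^{\lin}|^p$ has mass of order $\varepsilon^p$ at every time. The one-dimensional damped propagator has a nonnegative kernel with $\int_{\mathbb{R}}D(t)f\,\mathrm{d}x=(1-\mathrm{e}^{-t})\int_{\mathbb{R}}f\,\mathrm{d}x$, so this mass accumulates. Hence $\|u^{\nlin}(t)\|_{L^1}$, which enters $M_u$ with weight $1$, grows like $\varepsilon^pt$ for as long as $v^{\nlin}$ is small compared with $\varepsilon$. Your own $L^1\to L^q$ bookkeeping gives the same, $\langle t\rangle^{\frac12(1-\frac1q)}\|u^{\nlin}(t)\|_{L^q}\approx\varepsilon^pt$. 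So $A(T)\gtrsim T$ is intrinsic, and your scheme stops at $T\approx\varepsilon^{-(p-1)}$. Since
\[
\frac{pq(pq-1)}{pq(q+1)+1}-(p-1)=\frac{pq(q-p)-(p-1)}{pq(q+1)+1},
\]
this is strictly shorter than the claimed lifespan whenever $pq(q-p)>p-1$. For instance, for any fixed $q>1$ and $p\downarrow1$, the target exponent tends to $\frac{q(q-1)}{q^2+q+1}$ while $p-1\to0$.

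Note also that $A$ and $B$ are never computed. Checking that the two target exponents coincide exactly on \eqref{PH-quadratic} only confirms that the statement is internally consistent. It does not show that your constraints produce these exponents.

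\textbf{The missing idea.} You need to absorb the cumulative effect of the localized free wave into the comparison profile rather than treat it perturbatively. In \eqref{Huygens-u-nonlinear-basic}, the forcing term $\varepsilon^p\langle t\rangle$ in $\|u^{\nlin}(t)\|_{L^q}$ generates, through d'Alembert, the explicit growth $\varepsilon^{pq}\langle t\rangle^{q+1}$ of $v^{\nlin}$. The paper builds this into $\Phi_{\mathrm{H},\varepsilon}=\varepsilon^qH_q+\varepsilon^{pq}\langle t\rangle^{q+1}$. The damped component $u^{\nlin}$ is then allowed to grow like $\varepsilon^p\langle t\rangle+\Xi_{\mathrm{H},\varepsilon}(t)$. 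Only the genuinely nonlinear feedback $\Phi_{\mathrm{H},\varepsilon}(t)^{-1}\int_0^t\Xi_{\mathrm{H},\varepsilon}^q\,\mathrm{d}\tau$ has to be small.

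The two exponents then come out as follows:
\begin{itemize}
\item The first comes from $\varepsilon^{pq(pq-1)}T^{1+pq(q+1)}\ll1$, using $\Phi_{\mathrm{H},\varepsilon}\geqslant\varepsilon^{pq}\langle t\rangle^{q+1}$.
\item The second comes from $\varepsilon^{q(pq-1)}T^{q+2+\frac{3-q}{2}(pq-1)}\ll1$ on $[1,T]$. There, $\Phi_{\mathrm{H},\varepsilon}\approx\varepsilon^q\langle t\rangle^{\frac{3-q}{2}}$ needs $T\leqslant S_\varepsilon$, and this is where $p>P_{\mathrm{H}}(q)$ enters.
\end{itemize}

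\textbf{Two secondary points.}
\begin{itemize}
\item Reusing the generic late-time estimate verbatim fails for $P_{\mathrm{H}}(q)<p<P_*(q)$. The plateau $\varepsilon$ in $\Phi_\varepsilon$ contributes $\varepsilon^{pq-1}\min\{T,T_0\}^{q+2}$ to $\mathcal{Q}_\varepsilon$. At the second target time this is not small unless $p\geqslant P_*(q)$. The Huygens argument removes the plateau from the profile and handles $[0,1]$ separately.
\item The non-localized source $|v^{\nlin}|^p$ must be estimated via the $L^q$--$L^q$ bound. The $L^1\to L^q$ route loses $\langle t\rangle^{\frac12(1-\frac1q)}$ (Proposition~\ref{Prop-Optimal-Lmr}) and would lower the second exponent to $\frac{2q(pq-1)}{pq(3-q)+4q}$.
\end{itemize}
Putting the \emph{localized} source in $L^1$, as you suggest, is harmless once the profile is set up this way.
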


\begin{remark}[Comparison with the generic lower bound]
	For every $p,q>1$, one has
	\begin{align*}
		\frac{pq(pq-1)}{pq(q+1)+1}-\frac{pq-1}{q+2}=\frac{(pq-1)^2}{(q+2)[pq(q+1)+1]}>0.
	\end{align*}
	Moreover, when $1<q<3$, we get
	\begin{align*}
		\frac{2q(pq-1)}{pq(3-q)+3q+1}-\frac{pq-1}{q+2}=\frac{(pq-1)q(3-q)[P_*(q)-p]}{(q+2)[pq(3-q)+3q+1]}.
	\end{align*}
	Consequently, the cancellation condition \eqref{Huygens-Condition-Introduction} yields a strictly improved lower lifespan bound for every $p>1$ when $q\geqslant3$, and for every $1<p<P_*(q)$ when $1<q<3$. At $p=P_*(q)$, the two powers coincide, whereas for $p>P_*(q)$ the two theorems give the same power of $\varepsilon$.
\end{remark}

\begin{remark}[The borderline case $q=3$]\label{Rem-Borderline-q3}
	When $q=3$, the contribution of the linear damped component contains the logarithmic factor $\varepsilon^3\ln(\mathrm{e}+t)$. On any polynomial time scale $t\lesssim\varepsilon^{-\sigma}$ with $\sigma>0$, one has
	\begin{align*}
		\varepsilon^\delta\ln\bigl(\mathrm{e}+\varepsilon^{-\sigma}\bigr)\to0
		\ \ \mbox{as}\ \ 
		\varepsilon\downarrow0
	\end{align*}
	for every $\delta>0$. Hence, this logarithmic correction is of lower order and does not change the lifespan powers in Theorem~\ref{Thm-Lower-Lifespan} and Theorem~\ref{Thm-Huygens-Lifespan}.
\end{remark}

\begin{remark}[Interpretation of the transition curves]
	For $1<q<3$, the curve $p=P_*(q)$ is determined by the balance between the generic lifespan scale $\varepsilon^{-\frac{pq-1}{q+2}}$ and the transition time
	$\varepsilon^{-\frac{2(q-1)}{3-q}}$ at which the contribution generated by the linear damped component becomes dominant. Likewise, the curve $p=P_{\mathrm{H}}(q)$ is determined by the balance between the Huygens-type lifespan scale $\varepsilon^{-\frac{pq(pq-1)}{pq(q+1)+1}}$ and the transition time $\varepsilon^{-\frac{2q(p-1)}{3q-1}}$ between the two terms in
	$\Phi_{\mathrm{H},\varepsilon}$.
\end{remark}

\begin{figure}[!htbp]
	\centering
	\begin{tikzpicture}
		\begin{axis}[
			width=0.82\textwidth,
			height=0.56\textwidth,
			xmin=1, xmax=8,
			ymin=1, ymax=5,
			axis lines=left,
			xlabel={$p$},
			ylabel={$q$},
			ylabel style={rotate=270},
			xtick={1,2,4,6,8},
			ytick={1,2,3,4,5},
			clip=true
			]
			\addplot[draw=none,fill=blue!18]coordinates {(1,3)(8,3)(8,5)(1,5)}
			\closedcycle;
			
			\addplot[name path=leftboundary,draw=none]coordinates {(1,1.001)(1,2.95)};
			
			\addplot[name path=sharpboundary,blue!75!black,very thick,domain=1.001:2.95,samples=250]
			({(2*x^2+x-1)/(x*(3-x))},{x});
			
			\addplot[orange!85!black,dash dot,very thick,domain=1.001:2.95,samples=250]
			({(2*x^2-x-1+sqrt((2*x^2-x-1)^2+8*x*(3-x)))/(2*x*(3-x))},{x});
			
			\addplot[blue!18]fill between[of=leftboundary and sharpboundary];
			
			\addplot[name path=rightboundary,draw=none]coordinates {(8,1.001)(8,2.95)};
			
			\addplot[gray!20]fill between[of=sharpboundary and rightboundary];
			
			\addplot[black,dashed,thick]coordinates {(1,3)(8,3)};
			
			\node at (axis cs:3.0,3.9){\large Sharp lifespan};
			\node at (axis cs:6.5,3.9){$\Omega_{\mathrm{Nakao},1}$};
			\node at (axis cs:6.3,1.35){\large Remaining gap};
			
			\node[blue!75!black,anchor=west]at (axis cs:4.75,1.8){$p=P_*(q)$};
			\node[orange!85!black,anchor=west]at (axis cs:2.65,2.4){$p=P_{\mathrm{H}}(q)$};
			\node[anchor=south east]at (axis cs:7.8,3.03){$q=3$};
		\end{axis}
	\end{tikzpicture}
\caption{Sharpness region and lower bound transition curves.}
	\label{Fig-Sharp-Range}
\end{figure}

Figure~\ref{Fig-Sharp-Range} gives a schematic illustration of the parameter regions associated with the lifespan estimates. The blue region $\Omega_{\mathrm{Nakao},1}$ is the range in which the generic lower bound matches the known upper lifespan estimates, while the gray region indicates the remaining gap. The solid curve $p=P_*(q)$ forms the boundary of the presently known sharpness region for $1<q<3$. The dash-dotted curve $p=P_{\mathrm{H}}(q)$ marks the transition between the two mechanisms in the Huygens-type lower bound and should not, at present, be interpreted as a sharpness boundary.

\paragraph{Outline of the proofs.}
Section~\ref{Section-Linear-Local} collects the linear estimates, finite propagation property and continuation criterion used throughout the paper. In Section~\ref{Section-Reduction}, we first derive coupled integral inequalities for the wave component and the nonlinear part of the damped component. The generic case and the Huygens-type case are then treated by two different comparison profiles and the corresponding scalar bootstrap functionals. Section~\ref{Section-Proof-Lower} establishes the smallness of
these functionals on the time scales stated in Theorem~\ref{Thm-Lower-Lifespan} and Theorem~\ref{Thm-Huygens-Lifespan}, thereby proving the lifespan lower bounds. Finally, Section~\ref{Section-Upper-Comparison} combines the generic lower bound with the known upper lifespan estimates and proves Corollary~\ref{Cor-Sharp-Lifespan}.

\section{Estimates for the linear problems and local theory}\label{Section-Linear-Local}

This section collects estimates for the linear damped wave and free wave equations, together with the local theory needed for the proofs of Theorem~\ref{Thm-Lower-Lifespan} and Theorem~\ref{Thm-Huygens-Lifespan}. The $L^m-L^r$ estimates for the damped wave equation control the homogeneous and Duhamel parts of the $u$-component, while the d'Alembert formula describes the homogeneous free wave profile of the $v$-component and its localization under the condition \eqref{Huygens-Condition-Introduction}. Finally, the continuation criterion reduces the lifespan problem to an a priori $L^\infty$ bound for the wave component on a prescribed time interval.

\subsection{Estimates for the linear damped wave equation}

Let $D(t)$ denote the solution operator corresponding to the initial velocity for the one-dimensional damped wave equation. Thus, the solution to
\begin{align}\label{Damped-Wave}
\begin{cases}
\phi_{tt}-\phi_{xx}+\phi_t=F(t,x),&x\in\mb{R},\ t>0,\\
(\phi,\phi_t)(0,x)=(\phi_0,\phi_1)(x),&x\in\mb{R},
\end{cases}
\end{align}
is represented by
\begin{align*}
\phi(t,x)=D(t)\big(\phi_0(x)+\phi_1(x)\big)+\partial_tD(t)\phi_0(x)+\int_0^tD(t-\tau)F(\tau,x)\,\mathrm{d}\tau.
\end{align*}
The following estimates are consequences of the sharp $L^m-L^r$ theory established in \cite[Theorem~1.1]{Ikeda-Inui-Okamoto-Wakasugi=2019}. In one space dimension, the derivative-loss index in the high-frequency estimates vanishes.

\begin{lemma}[Damped wave estimates]\label{Lemma-Damped-Wave}
	Let $1\leqslant m\leqslant r<\infty$ with $r>1$. Then, there
	exists a constant $c>0$ such that
	\begin{align}\label{Damped-D-Lmr}
		\|D(t)f(\cdot)\|_{L^r}&\lesssim\langle t\rangle^{-\frac12\left(\frac1m-\frac1r\right)}\|f\|_{L^m}+\mathrm{e}^{-ct}\|f\|_{L^r},\\\label{Damped-Dt-Lmr}
		\|\partial_tD(t)f(\cdot)\|_{L^r}&\lesssim\langle t\rangle^{-\frac12\left(\frac1m-\frac1r\right)-1}\|f\|_{L^m}+\mathrm{e}^{-ct}\|f\|_{L^r},
	\end{align}
	for all $t\geqslant0$. In particular,
	\begin{align}\label{Damped-D-LrLr}
		\|D(t)f(\cdot)\|_{L^r}\lesssim\|f\|_{L^r}.
	\end{align}
	Moreover, let $\phi^{\mathrm{hom}}$ be the homogeneous solution corresponding
	to \eqref{Damped-Wave}. Then, $\phi^{\mathrm{hom}}$ satisfies
\begin{align}\label{Damped-Linear-L1-Lr}
	\|\phi^{\mathrm{hom}}(t,\cdot)\|_{L^r}\lesssim\langle t\rangle^{-\frac12\left(1-\frac1r\right)}\big(\|\phi_0\|_{H^1\cap L^1}+\|\phi_1\|_{L^1\cap L^r}\big)
\end{align}
	for all $t\geqslant0$.
\end{lemma}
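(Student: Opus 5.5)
The estimates \eqref{Damped-D-Lmr} and \eqref{Damped-Dt-Lmr} are taken directly from \cite[Theorem~1.1]{Ikeda-Inui-Okamoto-Wakasugi=2019}, specialised to $n=1$. That result splits $D(t)$ and $\partial_tD(t)$ into low- and high-frequency parts with a cutoff $\chi(\xi)$ supported near $|\xi|\leqslant1$.

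For the low-frequency part, the symbol of $D(t)$ is
\begin{align*}
\frac{\mathrm{e}^{-t/2}\sinh\bigl(t\sqrt{1/4-\xi^2}\bigr)}{\sqrt{1/4-\xi^2}},
\end{align*}
which behaves like the heat kernel $\mathrm{e}^{-t\xi^2}$ plus an exponentially decaying error. It therefore satisfies the heat-type bound $\langle t\rangle^{-\frac12(\frac1m-\frac1r)}\|f\|_{L^m}$ for $1\leqslant m\leqslant r$, as in Young's inequality for the Gauss kernel. The time derivative brings out a factor $-\xi^2$ at leading order, which gives the extra $\langle t\rangle^{-1}$ in \eqref{Damped-Dt-Lmr}.

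For the high-frequency part, the symbol is the wave propagator multiplied by $\mathrm{e}^{-t/2}$. In dimension one, $\mathrm{e}^{-t/2}\sin(t|\xi|)/|\xi|$ and $\mathrm{e}^{-t/2}\cos(t|\xi|)$ are bounded on $L^r$ for $1<r<\infty$ with norm $\lesssim\mathrm{e}^{-ct}$. This follows from the d'Alembert representation together with Mikhlin-type multiplier bounds for the cutoff, and it is why the derivative loss vanishes. This gives the $\mathrm{e}^{-ct}\|f\|_{L^r}$ terms. I would simply cite the theorem for these bounds and only note that the derivative-loss index $(n-1)|\frac12-\frac1r|$ equals $0$ when $n=1$.

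Estimate \eqref{Damped-D-LrLr} follows by taking $m=r$ in \eqref{Damped-D-Lmr}, since both terms on the right are then bounded by $\|f\|_{L^r}$.

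For \eqref{Damped-Linear-L1-Lr}, write $\phi^{\mathrm{hom}}=D(t)(\phi_0+\phi_1)+\partial_tD(t)\phi_0$ and take $m=1$.
\begin{itemize}
\item The term $D(t)\phi_1$ is bounded by $\langle t\rangle^{-\frac12(1-\frac1r)}\|\phi_1\|_{L^1}+\mathrm{e}^{-ct}\|\phi_1\|_{L^r}$, which has the required form.
\item For $D(t)\phi_0+\partial_tD(t)\phi_0$, the $L^1$-parts decay at least like $\langle t\rangle^{-\frac12(1-\frac1r)}\|\phi_0\|_{L^1}$. The $L^r$-parts are bounded by $\mathrm{e}^{-ct}\|\phi_0\|_{L^r}$.
\item One has $\|\phi_0\|_{L^r}\lesssim\|\phi_0\|_{H^1}$ for all $r\in[2,\infty)$ by the one-dimensional Sobolev embedding $H^1\hookrightarrow L^2\cap L^\infty$.
\item For $1<r<2$, one has $\|\phi_0\|_{L^r}\leqslant\|\phi_0\|_{L^1}^{\theta}\|\phi_0\|_{L^2}^{1-\theta}\lesssim\|\phi_0\|_{H^1\cap L^1}$ by interpolation.
\end{itemize}
Since $\mathrm{e}^{-ct}\lesssim\langle t\rangle^{-\frac12(1-\frac1r)}$, all terms combine into \eqref{Damped-Linear-L1-Lr}.

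The main obstacle is the $\partial_tD(t)\phi_0$ term at high frequency. Its symbol $\mathrm{e}^{-t/2}\cos(t|\xi|)$ does not gain a derivative, so the bound must rely on $\phi_0\in L^r$ without losing regularity. In one dimension this is handled by the explicit formula $\frac12\bigl(\phi_0(x+t)+\phi_0(x-t)\bigr)\mathrm{e}^{-t/2}$ plus a smoother remainder, which is exactly what the cited theorem provides. I would cite the theorem for this step rather than reprove it.
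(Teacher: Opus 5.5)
Your proposal is correct and follows the paper's proof. Both arguments cite Theorem~1.1 of Ikeda--Inui--Okamoto--Wakasugi with vanishing derivative loss when $n=1$, take $m=r$ to get \eqref{Damped-D-LrLr}, and derive \eqref{Damped-Linear-L1-Lr} from the $m=1$ estimates together with $\|\phi_0\|_{L^r}\lesssim\|\phi_0\|_{H^1\cap L^1}$ (interpolation or Sobolev embedding), absorbing the exponentially decaying terms. The paper makes explicit one step you leave implicit. The cited theorem is stated with $\chi_{\leqslant1}(|\nabla|)f$, $\langle\nabla\rangle^{-1}\chi_{>1}(|\nabla|)f$ and a factor $\mathrm{e}^{-t/2}\langle t\rangle^{\delta_r}$. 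These are removed using three facts: the low-frequency cutoff is bounded on $L^m$, including $m=1$; the high-frequency multipliers are bounded on $L^r$ for $1<r<\infty$; and $\mathrm{e}^{-t/2}\langle t\rangle^{\delta_r}\lesssim\mathrm{e}^{-ct}$.
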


\begin{proof}
	Let $\chi_{\leqslant1}(|\nabla|)$ and $\chi_{>1}(|\nabla|)$ denote smooth low- and high-frequency cutoff operators, respectively. We apply \cite[Theorem~1.1]{Ikeda-Inui-Okamoto-Wakasugi=2019} with input exponent $m$, output exponent $r$ and $s_1=s_2=0$, where the derivative-loss index is
	\begin{align*}
		\beta_r:=(n-1)\left|\frac12-\frac1r\right|=0\ \ \mbox{if}\ \ n=1.
	\end{align*}
	 The cited theorem therefore gives
	\begin{align*}
		\|D(t)f(\cdot)\|_{L^r}\lesssim\langle t\rangle^{-\frac12\left(\frac1m-\frac1r\right)}\|\chi_{\leqslant1}(|\nabla|)f\|_{L^m}+\mathrm{e}^{-\frac t2}\langle t\rangle^{\delta_r}\|\langle\nabla\rangle^{-1}\chi_{>1}(|\nabla|)f\|_{L^r},
	\end{align*}
	where $\delta_r>0$ depends only on $r$. Similarly,
	\begin{align*}
		\|\partial_tD(t)f(\cdot)\|_{L^r}\lesssim\langle t\rangle^{-\frac12\left(\frac1m-\frac1r\right)-1}\|\chi_{\leqslant1}(|\nabla|)f\|_{L^m}+\mathrm{e}^{-\frac t2}\langle t\rangle^{\delta_r}\|\chi_{>1}(|\nabla|)f\|_{L^r}.
	\end{align*}
	The low-frequency cutoff is bounded on $L^m$, including the endpoint $m=1$. Moreover, since $1<r<\infty$, the multipliers $\langle\nabla\rangle^{-1}\chi_{>1}(|\nabla|)$ and $\chi_{>1}(|\nabla|)$ are bounded on $L^r$. We obtain \eqref{Damped-D-Lmr} and \eqref{Damped-Dt-Lmr}. Taking $m=r$ in \eqref{Damped-D-Lmr} yields \eqref{Damped-D-LrLr}.
	
	 Applying \eqref{Damped-D-Lmr} with $m=1$ to the first term and \eqref{Damped-Dt-Lmr} with $m=1$ to the second one, we obtain
	\begin{align*}
		\|\phi^{\mathrm{hom}}(t,\cdot)\|_{L^r}&\lesssim \langle t\rangle^{-\frac12\left(1-\frac1r\right)}\|\phi_0+\phi_1\|_{L^1}+\mathrm{e}^{-ct}\|\phi_0+\phi_1\|_{L^r}\\
		&\quad\ +\langle t\rangle^{-\frac12\left(1-\frac1r\right)-1}\|\phi_0\|_{L^1}+\mathrm{e}^{-ct}\|\phi_0\|_{L^r}\\
		&\lesssim\langle t\rangle^{-\frac12\left(1-\frac1r\right)}\big(\|\phi_0\|_{L^1\cap L^r}+\|\phi_1\|_{L^1\cap L^r}\big).
	\end{align*}
Finally, interpolation between $L^1$ and $L^2$ for $1<r\leqslant2$, and between $L^2$ and $L^\infty$ for $2\leqslant r<\infty$, together with the embedding $H^1\hookrightarrow L^\infty$ in $\mb{R}$, yields
\begin{align*}
	\|\phi_0\|_{L^r}\lesssim\|\phi_0\|_{H^1\cap L^1}.
\end{align*}
After absorbing the exponentially decaying terms into the diffusion-type factor, we immediately obtain \eqref{Damped-Linear-L1-Lr}.
\end{proof}

\subsection{Estimates for the one-dimensional wave equation}

For the inhomogeneous wave equation
\begin{align}\label{Wave}
\begin{cases}
\varphi_{tt}-\varphi_{xx}=G(t,x),&x\in\mb{R},\ t>0,\\
(\varphi,\varphi_t)(0,x)=(\varphi_0,\varphi_1)(x),&x\in\mb{R},
\end{cases}
\end{align}
the one-dimensional d'Alembert formula reads
\begin{align}\label{D-Alembert}
\varphi(t,x)&=\frac12\big(\varphi_0(x+t)+\varphi_0(x-t)\big)+\frac12\int_{x-t}^{x+t}\varphi_1(y)\,\mathrm{d}y\notag\\
&\quad\ +\frac12\int_0^t\int_{x-(t-\tau)}^{x+(t-\tau)}G(\tau,y)\,\mathrm{d}y\,\mathrm{d}\tau,
\end{align}
which yields
\begin{align}\label{Wave-Linfty}
\|\varphi(t,\cdot)\|_{L^\infty}\lesssim\|\varphi_0\|_{L^\infty}+\|\varphi_1\|_{L^1}+\int_0^t\|G(\tau,\cdot)\|_{L^1}\,\mathrm{d}\tau.
\end{align}

The following elementary lemma records the one-dimensional Huygens-type localization used below. The assertion follows immediately from the d'Alembert formula
\eqref{D-Alembert}, the zero-mean condition and the embedding $H^1\hookrightarrow L^\infty$ in $\mb{R}$.

\begin{lemma}[Localization of the free wave]\label{Lemma-Huygens-Free-Wave}
Let $\varphi^{\hom}$ denote the homogeneous solution to \eqref{Wave} with compactly supported data satisfying
\begin{align*}
\operatorname{supp}(\varphi_0,\varphi_1)\subset[-R,R] \ \ \mbox{and}\ \ \int_{\mb{R}}\varphi_1(x)\,\mathrm{d}x=0.
\end{align*}
Then, the following support condition holds:
\begin{align*}
\operatorname{supp}\varphi^{\hom}(t,\cdot) \subset \big\{x\in\mb{R}:(t-R)_+\leqslant |x|\leqslant t+R\big\},
\end{align*}
where $(t-R)_+:=\max\{t-R,0\}$.
In particular, for every $s\in[1,\infty]$, $\varphi^{\hom}$ satisfies
\begin{align}\label{Huygens-Ls}
\sup_{t\geqslant0}\|\varphi^{\hom}(t,\cdot)\|_{L^s} \lesssim\|\varphi_0\|_{H^1}+\|\varphi_1\|_{L^1},
\end{align}
where the implicit constant may depend on $R$.
\end{lemma}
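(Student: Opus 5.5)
We plan to read off the support condition directly from the d'Alembert formula \eqref{D-Alembert} with $G\equiv0$, and then to obtain \eqref{Huygens-Ls} from an $L^\infty$ bound combined with a bound on the measure of the support.

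\emph{Support.} Write
\begin{align*}
\varphi^{\hom}(t,x)=\tfrac12\big(\varphi_0(x+t)+\varphi_0(x-t)\big)+\tfrac12\int_{x-t}^{x+t}\varphi_1(y)\,\mathrm{d}y.
\end{align*}
If $|x|>t+R$, then both $x\pm t$ lie outside $[-R,R]$, and so does the whole interval $[x-t,x+t]$. Hence $\varphi^{\hom}(t,x)=0$.

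Now suppose $t>R$ and $|x|<t-R$. Then $x+t>R$ and $x-t<-R$, so the $\varphi_0$ terms vanish. Moreover, the interval $[x-t,x+t]$ contains $[-R,R]\supset\operatorname{supp}\varphi_1$. The integral therefore equals $\int_{\mb R}\varphi_1\,\mathrm{d}y=0$ by the zero-mean hypothesis.

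Combining the two cases gives the claimed annular support. For $t\leqslant R$ the lower bound $(t-R)_+=0$ is trivial. Since $\varphi^{\hom}(t,\cdot)$ is continuous (because $\varphi_0\in H^1$ is continuous), the closed support lies in the closed set $\{(t-R)_+\leqslant|x|\leqslant t+R\}$.

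\emph{$L^\infty$ bound.} From the formula,
\begin{align*}
|\varphi^{\hom}(t,x)|\leqslant\|\varphi_0\|_{L^\infty}+\tfrac12\|\varphi_1\|_{L^1}\lesssim\|\varphi_0\|_{H^1}+\|\varphi_1\|_{L^1},
\end{align*}
using $H^1\hookrightarrow L^\infty$ on $\mb R$. This bound is uniform in $t$ and settles the case $s=\infty$.

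\emph{$L^s$ bounds for finite $s$.} Here lies the only point needing care: the support has measure comparable to $t$ when $t\leqslant R$, but only at most $4R$ when $t>R$. Indeed, for $t>R$ the support consists of two intervals, each of length $2R$. For $t\leqslant R$ it is contained in $[-2R,2R]$, of measure $4R$. Hence in all cases
\begin{align*}
|\operatorname{supp}\varphi^{\hom}(t,\cdot)|\leqslant4R,
\end{align*}
and by Hölder's inequality
\begin{align*}
\|\varphi^{\hom}(t,\cdot)\|_{L^s}\leqslant(4R)^{1/s}\|\varphi^{\hom}(t,\cdot)\|_{L^\infty}.
\end{align*}
Taking the supremum over $t\geqslant0$ yields \eqref{Huygens-Ls}, with the implicit constant depending on $R$. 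Without the zero-mean condition, the interior plateau would make the support measure grow like $t$, and the uniform-in-time bound would fail for $s<\infty$.
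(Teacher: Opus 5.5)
Your proposal is correct and follows the same route as the paper, which justifies the lemma in one line by citing the d'Alembert formula, the zero-mean condition and the embedding $H^1\hookrightarrow L^\infty$. You have simply written out those details: vanishing for $|x|>t+R$ and for $|x|<t-R$ when $t>R$, the uniform $L^\infty$ bound, and the bound $4R$ on the support measure combined with H\"older's inequality.
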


We now apply the preceding estimates to the homogeneous free component $v^{\lin}$. By \eqref{Wave-Linfty} and the embedding
$H^1\hookrightarrow L^\infty$ in $\mb{R}$, we arrive at
\begin{align*}
\|v^{\lin}(t,\cdot)\|_{L^\infty}\lesssim\|v_0\|_{H^1}+\|v_1\|_{L^1}.
\end{align*}
For general compactly supported data, finite propagation and the preceding $L^\infty$ estimate give
\begin{align}\label{vlin-general-Ls}
\|v^{\lin}(t,\cdot)\|_{L^s}\lesssim\langle t\rangle^{\frac1s}\big(\|v_0\|_{H^1}+\|v_1\|_{L^1}\big)\ \ \mbox{for all}\ \ 1\leqslant s<\infty.
\end{align}
Moreover, if
$\operatorname{supp}(v_0,v_1)\subset[-R,R]$, then \eqref{D-Alembert} yields
\begin{align*}
v^{\lin}(t,x)=\frac12\int_{\mb{R}}v_1(y)\,\mathrm{d}y\ \ \mbox{for}\ \  t> R,\ \ |x|< t-R.
\end{align*}
Thus, the growth in \eqref{vlin-general-Ls} is generated by the interior plateau when the spatial integral of $v_1$ is nonzero. Under the zero-mean condition in Lemma~\ref{Lemma-Huygens-Free-Wave}, the estimate \eqref{Huygens-Ls} improves \eqref{vlin-general-Ls} to a uniform in-time bound.

\subsection{Local well-posedness and continuation}\label{Section-Local-Theory}

Since $H^1\hookrightarrow L^\infty$ in $\mb{R}$, the mappings $w\mapsto |w|^p$ and $w\mapsto |w|^q$ are locally Lipschitz from $H^1$ to $L^2$ for every $p,q>1$. We use the following standard local theory.

\begin{prop}[Local well-posedness, blow-up alternative and finite propagation]\label{Prop-Local-Theory}
Let $p,q>1$ and $(u_0,u_1),(v_0,v_1)\in H^1\times L^2$. For every $\varepsilon>0$, there exists a unique maximal mild energy solution
\begin{align*}
(u,v)\in\Big(\ml{C}\big([0,T_\varepsilon),H^1\big) \cap\ml{C}^1\big([0,T_\varepsilon),L^2\big)\Big)\times \Big(\ml{C}\big([0,T_\varepsilon),H^1\big) \cap\ml{C}^1\big([0,T_\varepsilon),L^2\big)\Big)
\end{align*}
to Nakao's problem \eqref{Nakao-Problem}. If $T_\varepsilon<\infty$, then
\begin{align*}
\limsup_{t\to T_\varepsilon^-}
\big(\|(u,u_t)(t,\cdot)\|_{H^1\times L^2} +\|(v,v_t)(t,\cdot)\|_{H^1\times L^2}\big)=+\infty.
\end{align*}
If $\operatorname{supp}(u_0,u_1,v_0,v_1)\subset[-R,R]$ for some $R>0$, then
\begin{align}\label{Finite-Propagation}
\operatorname{supp}\big(u(t,\cdot),v(t,\cdot)\big)\subset[-R-t,R+t]
\end{align}
for every $t\in[0,T_\varepsilon)$.
\end{prop}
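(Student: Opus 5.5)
The statement is the standard local theory for the coupled system. I would prove it by a contraction-mapping argument in the energy space, followed by the usual maximal-extension procedure and a domain-of-dependence argument for the support.

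\emph{Step 1: the fixed-point map.} Set $X_T:=\ml{C}([0,T],H^1)\cap\ml{C}^1([0,T],L^2)$ with norm $\sup_{t\leqslant T}\|(w,w_t)(t,\cdot)\|_{H^1\times L^2}$. On $X_T\times X_T$ define $\Phi(u,v)=(\tilde u,\tilde v)$ as follows.
\begin{itemize}
\item $\tilde u$ is the mild solution of \eqref{Damped-Wave} with data $\varepsilon(u_0,u_1)$ and source $F=|v|^p$, given by the Duhamel formula with $D(t)$.
\item $\tilde v$ is given by \eqref{D-Alembert} with data $\varepsilon(v_0,v_1)$ and source $G=|u|^q$.
\end{itemize}

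\emph{Step 2: energy bounds for the linear problems.} Both the damped and the free wave equations are linear and satisfy the energy estimate
\begin{align*}
\|(\phi,\phi_t)(t,\cdot)\|_{H^1\times L^2}\lesssim\langle t\rangle\Big(\|(\phi_0,\phi_1)\|_{H^1\times L^2}+\int_0^t\|F(\tau,\cdot)\|_{L^2}\,\mathrm{d}\tau\Big).
\end{align*}
This follows from multiplying by $\phi_t$. The $L^2$ norm of $\phi$ itself is recovered from $\phi(t)=\phi(0)+\int_0^t\phi_t$, which is where the factor $\langle t\rangle$ comes from.

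\emph{Step 3: the nonlinear estimate.} Use $H^1\hookrightarrow L^\infty$ in $\mb{R}$ together with $\big||a|^p-|b|^p\big|\leqslant p(|a|+|b|)^{p-1}|a-b|$ for $p>1$. This gives
\begin{align*}
\big\||v|^p-|\tilde v|^p\big\|_{L^2}\lesssim(\|v\|_{H^1}+\|\tilde v\|_{H^1})^{p-1}\|v-\tilde v\|_{L^2},
\end{align*}
and similarly for $q$. This is the local Lipschitz property quoted before the proposition.

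\emph{Step 4: contraction, uniqueness and extension.} Combining Steps 2 and 3, $\Phi$ maps a ball of radius $2C\varepsilon\|\text{data}\|$ in $X_T\times X_T$ into itself and is a contraction there, for some $T>0$ depending only on the size of the data. This gives local existence. Uniqueness in the whole energy class follows from the same Lipschitz bound and Gronwall's inequality. Because the existence time depends only on the $H^1\times L^2$ norm of the data, we can restart the argument from any time and glue the solutions. Taking the supremum of existence times defines $T_\varepsilon$. If $T_\varepsilon<\infty$ while the energy norm stayed bounded along a sequence $t_k\to T_\varepsilon$, restarting at $t_k$ with a uniform existence time would extend the solution beyond $T_\varepsilon$, a contradiction. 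This yields the blow-up alternative.

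\emph{Step 5: finite propagation speed.} This is the only step with some content. Both linear operators have propagation speed $1$:
\begin{itemize}
\item for the wave part, this is explicit in \eqref{D-Alembert};
\item for the damped part, a standard local energy estimate on backward cones shows the same (equivalently, the substitution $\phi=\mathrm{e}^{-t/2}\psi$ gives $\psi_{tt}-\psi_{xx}-\tfrac14\psi=\mathrm{e}^{t/2}F$, whose Riemann function is supported in the cone).
\end{itemize}
Run the Picard iteration starting from zero. By induction, every iterate is supported in the cone $\{|x|\leqslant R+t\}$, because the sources $|v|^p$ and $|u|^q$ of each step inherit the support of the previous iterate. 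The iterates converge in $X_T$, so the limit is supported in the cone on each local interval. Gluing the intervals gives \eqref{Finite-Propagation} on all of $[0,T_\varepsilon)$.

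An alternative for Step 5 is a direct argument: apply the local energy inequality on truncated cones to the difference of the solution and zero outside $[-R-t,R+t]$, and close it with Gronwall's inequality using the $L^\infty$ bounds on $u$ and $v$.
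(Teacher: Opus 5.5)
Your proposal is correct and takes the same route the paper indicates: a contraction argument in the energy space using $H^1\hookrightarrow L^\infty$, the standard restart argument for the blow-up alternative (yours in fact yields $\lim$ rather than only $\limsup$), and finite propagation speed for both linear operators. The paper justifies the proposition in a single sentence, so your Steps 1--5 supply the standard details it omits.
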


This standard result follows from a contraction argument in the energy space, the corresponding blow-up alternative and the finite propagation speed for the damped wave and wave equations. 

We shall also use the following continuation criterion.

\begin{lemma}[Continuation criterion]\label{Lemma-Continuation}
Let the assumptions of Proposition~\ref{Prop-Local-Theory} hold, and assume additionally that $\operatorname{supp}(u_0,u_1,v_0,v_1)\subset[-R,R]$ for some $R>0$.	Let $T>0$ and put $S:=\min\{T,T_\varepsilon\}$. Suppose that
	\begin{align}\label{Bound-v-Linfty-Continuation}
		M_T:=\sup_{0\leqslant t<S}\|v(t,\cdot)\|_{L^\infty}<\infty.
	\end{align}
	Then, 
	\begin{align}\label{Uniform-Energy-Continuation}
		\sup_{0\leqslant t<S}\big(\|(u,u_t)(t,\cdot)\|_{H^1\times L^2}+\|(v,v_t)(t,\cdot)\|_{H^1\times L^2}\big)<\infty.
	\end{align}
	Consequently,
	\begin{align*}
		\sup_{0\leqslant t<S}\|v(t,\cdot)\|_{L^\infty}<\infty\ \ \Rightarrow\ \ T_\varepsilon>T.
	\end{align*}
\end{lemma}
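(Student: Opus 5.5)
The plan is to reduce everything to energy estimates on the finite time interval $[0,S)$, using that a bounded $v$ makes the $u$-equation a linear damped wave equation with a bounded forcing. Since $S\leqslant T<\infty$, all constants may depend on $T$, and we only need boundedness, not decay.

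\emph{Step 1: the $u$-component.} By the $L^\infty$ bound \eqref{Bound-v-Linfty-Continuation} and the finite propagation property \eqref{Finite-Propagation}, we have
\begin{align*}
\||v(t,\cdot)|^p\|_{L^2}\leqslant M_T^p\,(2(R+T))^{1/2}
\end{align*}
for $0\leqslant t<S$. We multiply the $u$-equation by $u_t$ and integrate over $\mb{R}$; the damping term has a good sign. Setting $E_u(t):=\frac12\|(u_x,u_t)(t,\cdot)\|_{L^2}^2$, this gives
\begin{align*}
\frac{\mathrm{d}}{\mathrm{d}t}E_u(t)\leqslant\||v|^p\|_{L^2}\|u_t\|_{L^2}\leqslant C_T\,E_u(t)^{1/2}.
\end{align*}
Hence $E_u(t)^{1/2}\leqslant E_u(0)^{1/2}+C_Tt$. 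The $L^2$ norm of $u$ is controlled by $\|u(t)\|_{L^2}\leqslant\|u(0)\|_{L^2}+\int_0^t\|u_t\|_{L^2}$. Together these bound $\|(u,u_t)\|_{H^1\times L^2}$ uniformly on $[0,S)$. In particular, $H^1\hookrightarrow L^\infty$ gives $\sup_{[0,S)}\|u\|_{L^\infty}<\infty$.

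\emph{Step 2: the $v$-component.} Now $|u|^q$ is bounded in $L^2$ uniformly on $[0,S)$, since $\||u|^q\|_{L^2}\leqslant\|u\|_{L^\infty}^{q-1}\|u\|_{L^2}$. The same energy identity for the free wave equation yields the bound on $\|(v_x,v_t)\|_{L^2}$ growing at most linearly in $t$, and the same integration gives $\|v\|_{L^2}$. This proves \eqref{Uniform-Energy-Continuation}.

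\emph{Step 3: the conclusion.} Suppose $T_\varepsilon\leqslant T$. Then $S=T_\varepsilon<\infty$ and \eqref{Uniform-Energy-Continuation} contradicts the blow-up alternative in Proposition~\ref{Prop-Local-Theory}; hence $T_\varepsilon>T$.

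The one technical point is justifying the energy identity for mild energy solutions, since $u_t$ is only in $\ml{C}([0,S),L^2)$. I would handle it by approximating the data and forcing by smooth compactly supported functions, using continuous dependence from the contraction argument, and passing to the limit in the integrated identity. Alternatively, one can use the Duhamel formula directly with the uniform-in-time energy bounds of the linear propagators on bounded intervals. This avoids regularization: it gives $\|(u,u_t)(t)\|_{H^1\times L^2}\lesssim_T\|(u,u_t)(0)\|_{H^1\times L^2}+\int_0^t\||v|^p\|_{L^2}\,\mathrm{d}\tau$, and similarly for $v$.
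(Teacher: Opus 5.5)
Your proposal is correct and follows essentially the paper's proof: an energy estimate for $u$ with the forcing $|v|^p$ bounded in $L^2$ via $M_T$ and finite propagation, then the wave energy estimate with $|u|^q$ controlled through the resulting $H^1$ bound on $u$, the fundamental theorem of calculus for $\|v\|_{L^2}$, and a contradiction with the blow-up alternative. The differences are cosmetic. You bound $\||u|^q\|_{L^2}$ by $\|u\|_{L^\infty}^{q-1}\|u\|_{L^2}$ where the paper uses $H^1\hookrightarrow L^{2q}$. Your Duhamel-form justification of the energy inequality is exactly the form the paper invokes directly.
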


\begin{proof}
	By the energy estimate for the damped wave equation, we have
	\begin{align}\label{Energy-u-Continuation}
		\|(u,u_t)(t,\cdot)\|_{H^1\times L^2}\lesssim\varepsilon\|(u_0,u_1)\|_{H^1\times L^2}+\int_0^t\|\,|v(\tau,\cdot)|^p\|_{L^2}\,\mathrm{d}\tau
	\end{align}
	for every $t\in[0,S)$. By \eqref{Finite-Propagation}, $\operatorname{supp}v(\tau,\cdot)\subset[-R-\tau,R+\tau]$. Since the length of the support is bounded by $2(R+\tau)\lesssim\langle\tau\rangle$, we have
	\begin{align*}
		\|\,|v(\tau,\cdot)|^p\|_{L^2}\lesssim\langle\tau\rangle^{\frac12}\|v(\tau,\cdot)\|_{L^\infty}^p\leqslant M_T^p\langle\tau\rangle^{\frac12}.
	\end{align*}
	Substituting this estimate into
	\eqref{Energy-u-Continuation}, we obtain
	\begin{align*}
		\sup_{0\leqslant t<S} \|(u,u_t)(t,\cdot)\|_{H^1\times L^2} \lesssim \varepsilon\|(u_0,u_1)\|_{H^1\times L^2} + M_T^p\langle T\rangle^{\frac32}.
	\end{align*}
	In particular, if we set 
	\begin{align*}
		K_T:=\sup_{0\leqslant t<S}\|u(t,\cdot)\|_{H^1},
	\end{align*}
	then $K_T<\infty$.
	
We next apply the standard energy estimate for the wave equation. It gives
\begin{align}\label{Energy-v-Continuation}
	\|v_t(t,\cdot)\|_{L^2}+\|v_x(t,\cdot)\|_{L^2}\lesssim\varepsilon\|(v_0,v_1)\|_{H^1\times L^2}+\int_0^t\|\,|u(\tau,\cdot)|^q\|_{L^2}\,\mathrm{d}\tau.
\end{align}
Since $H^1\hookrightarrow L^{2q}$ in $\mb{R}$ for every $q>1$,
we have
\begin{align*}
	\|\,|u(\tau,\cdot)|^q\|_{L^2}=\|u(\tau,\cdot)\|_{L^{2q}}^q\lesssim\|u(\tau,\cdot)\|_{H^1}^q\leqslant K_T^q,
\end{align*}
which allows
\begin{align*}
	\sup_{0\leqslant t<S}\big(\|v_t(t,\cdot)\|_{L^2}+\|v_x(t,\cdot)\|_{L^2}\big)\lesssim\varepsilon\|(v_0,v_1)\|_{H^1\times L^2}+TK_T^q.
\end{align*}
Moreover, by the fundamental theorem of calculus, one derives
\begin{align*}
	\|v(t,\cdot)\|_{L^2}&\leqslant\varepsilon\|v_0\|_{L^2}+\int_0^t\|v_t(s,\cdot)\|_{L^2}\,\mathrm{d}s\\
	&\lesssim\varepsilon\langle T\rangle\|(v_0,v_1)\|_{H^1\times L^2}+T^2K_T^q.
\end{align*}
Therefore,
\begin{align*}
	\sup_{0\leqslant t<S}\|(v,v_t)(t,\cdot)\|_{H^1\times L^2}\lesssim\varepsilon\langle T\rangle\|(v_0,v_1)\|_{H^1\times L^2}+\langle T\rangle TK_T^q<\infty.
\end{align*}
This proves \eqref{Uniform-Energy-Continuation}.
	
	If $T_\varepsilon\leqslant T$, then $S=T_\varepsilon$. The estimate \eqref{Uniform-Energy-Continuation} shows that the energy norms remain uniformly bounded as
	$t\to T_\varepsilon^{-}$, which provides a contradiction. Hence, $T_\varepsilon>T$.
\end{proof}

\section{Nonlinear reduction and bootstrap framework}\label{Section-Reduction}
In this section, we reduce the nonlinear system to coupled integral inequalities for the wave component and the nonlinear part of the damped component. Suitable time-dependent profiles and bootstrap functionals are then introduced to capture the dominant linear contributions. This reduction transforms the proofs of Theorem~\ref{Thm-Lower-Lifespan} and Theorem~\ref{Thm-Huygens-Lifespan} into the smallness estimates established in the next section.

Fix $T>0$ and put
\begin{align*}
	S:=\min\{T,T_\varepsilon\}.
\end{align*}
Throughout this section, all estimates are understood on $[0,S)$.

\subsection{The nonlinear damped component}

By Duhamel's formula, we decompose the damped component as
\begin{align}\label{u-decomposition}
u(t,x)=\varepsilon u^{\lin}(t,x)+u^{\nlin}(t,x),
\end{align}
where
\begin{align}\label{u-nonlinear-Duhamel}
u^{\nlin}(t,x)=\int_0^tD(t-\tau)|v(\tau,x)|^p\,\mathrm{d}\tau.
\end{align}
The next proposition determines the time-growth exponent obtainable from the global $L^m-L^r$ estimate when $u^{\nlin}$ is measured in $L^q$.

\begin{prop}[Estimate for the nonlinear damped component]
	\label{Prop-Optimal-Lmr}
	Assume that $\operatorname{supp}(u_0,u_1,v_0,v_1) \subset[-R,R]$ for some $R>0$, and that
	\begin{align}
		\|v(t,\cdot)\|_{L^\infty}\leqslant A\langle t\rangle^b\label{Abstract-v-profile}
	\end{align}
	for every $t\in[0,S)$, where $A>0$ and $b\geqslant0$. Let $1\leqslant m\leqslant r<\infty$ with $r\geqslant q$. Then, $u^{\nlin}$ satisfies
	\begin{align}
		\|u^{\nlin}(t,\cdot)\|_{L^q}\lesssim A^p\langle t\rangle^{1+\frac1q+pb+\frac1{2m}-\frac1{2r}}\label{General-Lmr-u-nonlinear}
	\end{align}
	for every $t\in[0,S)$. In particular, taking $m=r=q$ gives
	\begin{align}
		\|u^{\nlin}(t,\cdot)\|_{L^q}\lesssim A^p\langle t\rangle^{1+\frac1q+pb}.\label{Optimal-u-nonlinear-bound}
	\end{align}
	The additional exponent $\frac1{2m}-\frac1{2r}$ is nonnegative and vanishes whenever $m=r$. Hence, \eqref{Optimal-u-nonlinear-bound} has the smallest time-growth exponent obtainable from the global $L^m-L^r$ estimate \eqref{Damped-D-Lmr} combined with finite propagation. This minimum is attained for every $m=r\geqslant q$, and in particular for $m=r=q$.
\end{prop}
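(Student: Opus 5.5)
The plan is to bound the Duhamel integral \eqref{u-nonlinear-Duhamel} in $L^r$ with Minkowski's inequality and \eqref{Damped-D-Lmr}, and then pass from $L^r$ to $L^q$ using Hölder's inequality on the compact support supplied by finite propagation. The hypothesis $r\geqslant q$ is exactly what makes this last Hölder step possible.

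First, by Minkowski's inequality,
\begin{align*}
\|u^{\nlin}(t,\cdot)\|_{L^r}\leqslant\int_0^t\|D(t-\tau)|v(\tau,\cdot)|^p\|_{L^r}\,\mathrm{d}\tau .
\end{align*}
Applying \eqref{Damped-D-Lmr} inside the integral gives
\begin{align*}
\|u^{\nlin}(t,\cdot)\|_{L^r}\lesssim\int_0^t\Big(\langle t-\tau\rangle^{-\frac12\left(\frac1m-\frac1r\right)}\|v(\tau,\cdot)\|_{L^{pm}}^p+\mathrm{e}^{-c(t-\tau)}\|v(\tau,\cdot)\|_{L^{pr}}^p\Big)\mathrm{d}\tau .
\end{align*}
By \eqref{Finite-Propagation}, $v(\tau,\cdot)$ is supported in an interval of length at most $2(R+\tau)\lesssim\langle\tau\rangle$. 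Combined with \eqref{Abstract-v-profile}, this yields, for every $s\in[1,\infty)$,
\begin{align*}
\||v(\tau,\cdot)|^p\|_{L^s}\lesssim\langle\tau\rangle^{\frac1s}A^p\langle\tau\rangle^{pb}.
\end{align*}
Taking $s=m$ and $s=r$ and using $\langle\tau\rangle\leqslant\langle t\rangle$ in both terms, together with the crude bound $\langle t-\tau\rangle^{-\frac12(\frac1m-\frac1r)}\leqslant1$, I get
\begin{align*}
\|u^{\nlin}(t,\cdot)\|_{L^r}\lesssim A^p\langle t\rangle^{1+pb+\frac1m}.
\end{align*}
This is a correct bound, but not yet the stated exponent. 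The decay factor has to be kept to reach the exponent in \eqref{General-Lmr-u-nonlinear}.

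Second, the passage to $L^q$. The set $\operatorname{supp}u^{\nlin}(t,\cdot)\subset[-R-t,R+t]$ has length $\lesssim\langle t\rangle$, and since $r\geqslant q$, Hölder's inequality gives
\begin{align*}
\|u^{\nlin}(t,\cdot)\|_{L^q}\lesssim\langle t\rangle^{\frac1q-\frac1r}\|u^{\nlin}(t,\cdot)\|_{L^r}.
\end{align*}
The support claim follows from finite propagation for $D(t-\tau)$, or directly from Proposition~\ref{Prop-Local-Theory} applied to $u$ and $\varepsilon u^{\lin}$.

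Now I keep the decay in the first term. If $\frac12(\frac1m-\frac1r)<1$, which always holds, then
\begin{align*}
\int_0^t\langle t-\tau\rangle^{-\frac12(\frac1m-\frac1r)}\,\mathrm{d}\tau\lesssim\langle t\rangle^{1-\frac1{2m}+\frac1{2r}} .
\end{align*}
Combining this with the $\langle t\rangle^{\frac1m+pb}$ factor gives $\langle t\rangle^{1+pb+\frac1{2m}+\frac1{2r}}$. Multiplying by the Hölder factor then produces the exponent $1+\frac1q+pb+\frac1{2m}-\frac1{2r}$, as claimed. The exponential term contributes at most $\langle t\rangle^{pb+\frac1r}$ after the Hölder factor is included, which is dominated by the main term.

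Finally, taking $m=r=q$ gives \eqref{Optimal-u-nonlinear-bound}. The remarks on optimality amount to observing that $\frac1{2m}-\frac1{2r}\geqslant0$ for $m\leqslant r$, with equality exactly when $m=r$.

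The main obstacle is the exponent bookkeeping. One must not discard the decay $\langle t-\tau\rangle^{-\frac12(\frac1m-\frac1r)}$, must bound the $L^m$ norm of $|v|^p$ through its support length as $\langle\tau\rangle^{1/m}$, and must apply the $L^r\to L^q$ Hölder step only after integrating in $\tau$. Doing the steps in this order makes the exponents telescope to the stated one.
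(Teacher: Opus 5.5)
Your proposal is correct and follows the paper's proof essentially step by step. Both arguments use Minkowski's inequality with \eqref{Damped-D-Lmr}, the support bound $\|\,|v(\tau,\cdot)|^p\|_{L^s}\lesssim A^p\langle\tau\rangle^{\frac1s+pb}$ for $s\in\{m,r\}$, integration of the decay factor to reach $\langle t\rangle^{1+pb+\frac1{2m}+\frac1{2r}}$ in $L^r$, and finally H\"older's inequality on a support of length $\lesssim\langle t\rangle$ to pass to $L^q$.

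One small slip: once the H\"older factor $\langle t\rangle^{\frac1q-\frac1r}$ is included, the exponential term is of size $A^p\langle t\rangle^{pb+\frac1q}$, not $\langle t\rangle^{pb+\frac1r}$. It is still dominated by the main term, so the conclusion is unaffected.
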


\begin{proof}
By \eqref{Finite-Propagation}, the support of $v(\tau,\cdot)$ has length bounded by $C\langle\tau\rangle$. Hence, for $s\in\{m,r\}$, we are able to get
\begin{align*}
\|\,|v(\tau,\cdot)|^p\|_{L^s} \lesssim A^p\langle\tau\rangle^{\frac1s+pb}.
\end{align*}
Applying \eqref{Damped-D-Lmr} to \eqref{u-nonlinear-Duhamel} gives
\begin{align*}
\|u^{\nlin}(t,\cdot)\|_{L^r}&\lesssim A^p\int_0^t \langle t-\tau\rangle^{-\frac12\left(\frac1m-\frac1r\right)} \langle\tau\rangle^{\frac1m+pb}\,\mathrm{d}\tau+A^p\int_0^t
\mathrm{e}^{-c(t-\tau)} \langle\tau\rangle^{\frac1r+pb}\,\mathrm{d}\tau\\
&\lesssim A^p\langle t\rangle^{1+pb+\frac1{2m}+\frac1{2r}} +A^p\langle t\rangle^{pb+\frac1r},
\end{align*}
where we used 
\begin{align*}
	0\leqslant \frac12\left(\frac1m-\frac1r\right)<1\ \ \mbox{and}\ \ 1+pb+\frac1{2m}+\frac1{2r}>pb+\frac1r.
\end{align*}
Since $r\geqslant q$ and $u^{\nlin}(t,\cdot)$ is supported in an interval of length bounded by $C\langle t\rangle$, one has
\begin{align*}
\|u^{\nlin}(t,\cdot)\|_{L^q}&\lesssim\langle t\rangle^{\frac1q-\frac1r}\|u^{\nlin}(t,\cdot)\|_{L^r}\\
&\lesssim A^p\langle t\rangle^{1+\frac1q+pb+\frac1{2m}-\frac1{2r}},
\end{align*}
which immediately proves \eqref{General-Lmr-u-nonlinear}. Since $\frac1{2m}-\frac1{2r}\geqslant0$ and vanishes whenever $m=r$, taking $m=r=q$ yields \eqref{Optimal-u-nonlinear-bound}.
\end{proof}

For the proof of Theorem~\ref{Thm-Lower-Lifespan}, we choose $m=r=q$. By \eqref{Damped-D-LrLr}, \eqref{Finite-Propagation} and
\eqref{u-nonlinear-Duhamel}, together with
\begin{align*}
	\|\,|v(\tau,\cdot)|^p\|_{L^q}\lesssim\langle\tau\rangle^{\frac1q}\|v(\tau,\cdot)\|_{L^\infty}^p,
\end{align*}
we obtain
\begin{align}\label{u-nonlinear-basic}
\|u^{\nlin}(t,\cdot)\|_{L^q}\lesssim\int_0^t\langle\tau\rangle^{\frac1q}\|v(\tau,\cdot)\|_{L^\infty}^p\,\mathrm{d}\tau.
\end{align}
On the other hand, the estimate \eqref{Wave-Linfty}, the decomposition \eqref{u-decomposition}, the estimate \eqref{Damped-Linear-L1-Lr} with $r=q$ and the inequality $|a+b|^q\lesssim|a|^q+|b|^q$ imply
\begin{align}\label{v-basic}
\|v(t,\cdot)\|_{L^\infty}\lesssim\varepsilon+\varepsilon^q\int_0^t\langle\tau\rangle^{-\frac{q-1}{2}}\,\mathrm{d}\tau+\int_0^t\|u^{\nlin}(\tau,\cdot)\|_{L^q}^q\,\mathrm{d}\tau.
\end{align}
The required a priori estimates are reduced to closing the coupled inequalities \eqref{u-nonlinear-basic} and \eqref{v-basic}.

\subsection{Bootstrap framework}

The reduced inequalities \eqref{u-nonlinear-basic} and \eqref{v-basic} suggest that the wave component should be measured against the contributions of the
homogeneous free wave and the linear damped component. We therefore set
\begin{align*}
H_q(t):=\int_0^t\langle\tau\rangle^{-\frac{q-1}{2}}\,\mathrm{d}\tau,
\end{align*}
so that, for $t\geqslant 1$, the following sharp estimate holds:
\begin{align}\label{Hq-asymptotics}
H_q(t)\approx
\begin{cases}
\langle t\rangle^{\frac{3-q}{2}}&\mbox{if}\ \ 1<q<3,\\
\ln\langle t\rangle&\mbox{if}\ \ q=3,\\
1&\mbox{if}\ \ q>3.
\end{cases}
\end{align}
We therefore introduce the comparison profile
\begin{align*}
\Phi_\varepsilon(t):=\varepsilon+\varepsilon^qH_q(t).
\end{align*}
Substituting $\Phi_\varepsilon$ into \eqref{u-nonlinear-basic}, we define the corresponding comparison function for $u^{\nlin}$ by
\begin{align*}
\Psi_\varepsilon(t):=\int_0^t\langle\tau\rangle^{\frac1q}[\Phi_\varepsilon(\tau)]^p\,\mathrm{d}\tau.
\end{align*}
The only remaining nonlinear feedback is the last term in \eqref{v-basic}. We measure its size relative to $\Phi_\varepsilon$ via
\begin{align}\label{Qepsilon}
\ml{Q}_\varepsilon(T):=\sup_{0<t\leqslant T}\frac{1}{\Phi_\varepsilon(t)}\int_0^t[\Psi_\varepsilon(\tau)]^q\,\mathrm{d}\tau.
\end{align}

\medskip
\noindent\emph{Key reduction.}
The entire lower bound argument is reduced to proving that $\ml{Q}_\varepsilon(T)$ is sufficiently small on the desired time interval. The following lemma makes this reduction precise.
\medskip

\begin{lemma}[Bootstrap criterion]\label{Lemma-Bootstrap}
There exists a constant $\delta_0>0$, independent of $\varepsilon$ and $T$, such that
\begin{align}\label{Bootstrap-Q-condition}
\ml{Q}_\varepsilon(T)\leqslant\delta_0
\end{align}
implies
\begin{align*}
\|v(t,\cdot)\|_{L^\infty}\lesssim\Phi_\varepsilon(t) \ \ \mbox{and}\ \ \|u^{\nlin}(t,\cdot)\|_{L^q}\lesssim\Psi_\varepsilon(t)
\end{align*}
for every $t\in[0,S)$.
\end{lemma}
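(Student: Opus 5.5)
We argue by a continuity (bootstrap) argument on $[0,S)$, using the coupled inequalities \eqref{u-nonlinear-basic} and \eqref{v-basic}. Let $C_1\geqslant1$ be the implicit constant in \eqref{v-basic} and $C_2\geqslant1$ the one in \eqref{u-nonlinear-basic}. Define
\begin{align*}
	\mathcal{T}:=\Big\{t_0\in[0,S):\ \|v(t,\cdot)\|_{L^\infty}\leqslant 2C_1\Phi_\varepsilon(t)\ \mbox{for all}\ t\in[0,t_0]\Big\}.
\end{align*}
By Proposition~\ref{Prop-Local-Theory} we have $v\in\ml{C}([0,S),H^1)\hookrightarrow\ml{C}([0,S),L^\infty)$, so $t\mapsto\|v(t,\cdot)\|_{L^\infty}$ is continuous. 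Also, $\Phi_\varepsilon$ is continuous and positive. At $t=0$, the bound $\|v(0,\cdot)\|_{L^\infty}\leqslant C\varepsilon\|v_0\|_{H^1}$ gives $0\in\mathcal{T}$ after enlarging $C_1$ if necessary. Hence $\mathcal{T}$ is a nonempty, closed subinterval of $[0,S)$, and the task is to show that it is open.

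Assume $t_0\in\mathcal{T}$ and take $t\in[0,t_0]$. Inserting the bootstrap bound into \eqref{u-nonlinear-basic} gives
\begin{align*}
	\|u^{\nlin}(t,\cdot)\|_{L^q}\leqslant C_2(2C_1)^p\int_0^t\langle\tau\rangle^{\frac1q}[\Phi_\varepsilon(\tau)]^p\,\mathrm{d}\tau=C_2(2C_1)^p\Psi_\varepsilon(t).
\end{align*}
Substituting this into \eqref{v-basic} and using the definition of $H_q$ yields
\begin{align*}
	\|v(t,\cdot)\|_{L^\infty}\leqslant C_1\Phi_\varepsilon(t)+C_1\big(C_2(2C_1)^p\big)^q\int_0^t[\Psi_\varepsilon(\tau)]^q\,\mathrm{d}\tau\leqslant C_1\Phi_\varepsilon(t)\Big(1+\big(C_2(2C_1)^p\big)^q\ml{Q}_\varepsilon(T)\Big),
\end{align*}
where the second step uses \eqref{Qepsilon} and $t<S\leqslant T$. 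We choose $\delta_0:=\big(C_2(2C_1)^p\big)^{-q}/2$. Then \eqref{Bootstrap-Q-condition} gives the strict improvement $\|v(t,\cdot)\|_{L^\infty}\leqslant\frac32C_1\Phi_\varepsilon(t)$ on $[0,t_0]$. By continuity, the bound $2C_1\Phi_\varepsilon$ persists on a slightly larger interval, so $\mathcal{T}$ is open. Connectedness of $[0,S)$ then gives $\mathcal{T}=[0,S)$. This is the first claimed estimate, and the second follows from the display for $u^{\nlin}$.

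\textbf{Main obstacle: constant independence.} The delicate point is that $\delta_0$ must not depend on $\varepsilon$ or $T$. This holds because $C_1$ and $C_2$ come from \eqref{Wave-Linfty}, \eqref{Damped-D-LrLr}, \eqref{Damped-Linear-L1-Lr} and finite propagation, and these depend only on $p$, $q$, $R$ and the data norms. I would also check that \eqref{u-nonlinear-basic} is legitimately applied with only the bootstrap bound on $[0,t]$. This is fine because that inequality is causal: it involves $v$ only on $[0,t]$.
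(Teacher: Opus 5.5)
Your proposal is correct and follows essentially the same continuity argument as the paper: you bootstrap $\|v(t,\cdot)\|_{L^\infty}\leqslant M\Phi_\varepsilon(t)$ (your $M=2C_1$), feed it through \eqref{u-nonlinear-basic} and \eqref{v-basic}, and choose $\delta_0$, independent of $\varepsilon$ and $T$, so that the bound strictly improves. The only difference is that you spell out the closed/open argument and the check at $t=0$, which the paper leaves implicit.
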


\begin{proof}
We argue by continuity. Fix $M>1$ and assume that, on a maximal interval $[0,T_1]\subset[0,S)$,
\begin{align*}
\|v(t,\cdot)\|_{L^\infty}\leqslant M\Phi_\varepsilon(t).
\end{align*}
The estimate \eqref{u-nonlinear-basic} then yields
\begin{align*}
\|u^{\nlin}(t,\cdot)\|_{L^q}\lesssim M^p\Psi_\varepsilon(t).
\end{align*}
Substituting this into \eqref{v-basic}, and using the definition of $\ml{Q}_\varepsilon(T)$, we obtain
\begin{align*}
\|v(t,\cdot)\|_{L^\infty}
&\leqslant C_0\Phi_\varepsilon(t)+C_1M^{pq}\int_0^t[\Psi_\varepsilon(\tau)]^q\,\mathrm{d}\tau\\
&\leqslant\big(C_0+C_1M^{pq}\ml{Q}_\varepsilon(T)\big)\Phi_\varepsilon(t).
\end{align*}
Since $	\|v(0,\cdot)\|_{L^\infty}=\varepsilon\|v_0\|_{L^\infty}$ and $\Phi_\varepsilon(0)=\varepsilon$, the bootstrap interval is nonempty provided that $M>\|v_0\|_{L^\infty}$.
Choose $M\geqslant\max\{2,4C_0,2\|v_0\|_{L^\infty}\}$, and then $\delta_0>0$ so that $C_1M^{pq}\delta_0\leqslant C_0$. Under \eqref{Bootstrap-Q-condition}, the preceding estimate improves the bootstrap assumption to
\begin{align*}
\|v(t,\cdot)\|_{L^\infty}\leqslant\frac{M}{2}\Phi_\varepsilon(t).
\end{align*}
The strict improvement of the bootstrap bound implies that $T_1=S$. Since $M$ is fixed independently of $\varepsilon$ and $T$, the bound
for $u^{\nlin}$ follows from \eqref{u-nonlinear-basic}.
\end{proof}

Combined with Lemma~\ref{Lemma-Continuation}, the bootstrap criterion has a direct continuation consequence. Whenever \eqref{Bootstrap-Q-condition} holds, the solution cannot terminate at or before $T$, and hence $T_\varepsilon>T$. The proof of Theorem~\ref{Thm-Lower-Lifespan} is therefore reduced to establishing the required smallness of the scalar bootstrap functional $\ml{Q}_\varepsilon(T)$.

\subsection{Reduction under the Huygens-type condition}

Assume now that the condition \eqref{Huygens-Condition-Introduction} holds, and write
\begin{align}\label{v-decomposition}
v(t,x)=\varepsilon v^{\lin}(t,x)+v^{\nlin}(t,x),
\end{align}
where $v^{\lin}$ is the homogeneous free wave and 
\begin{align*}
v^{\nlin}(t,x)
=\frac12\int_0^t\int_{x-(t-\tau)}^{x+(t-\tau)}|u(\tau,y)|^q\,\mathrm{d}y\,\mathrm{d}\tau.
\end{align*}
Lemma~\ref{Lemma-Huygens-Free-Wave} allows us to estimate the two parts of $v$ differently. Indeed, using \eqref{Damped-D-LrLr} with $r=q$,
\eqref{Huygens-Ls} with $s=pq$, finite propagation for $v^{\nlin}$, the decomposition \eqref{v-decomposition} and the inequality $|a+b|^p\lesssim |a|^p+|b|^p$, we obtain
\begin{align}\label{Huygens-u-nonlinear-basic}
\|u^{\nlin}(t,\cdot)\|_{L^q}\lesssim\varepsilon^p\langle t\rangle+\int_0^t\langle\tau\rangle^{\frac1q}\|v^{\nlin}(\tau,\cdot)\|_{L^\infty}^p\,\mathrm{d}\tau.
\end{align}
The estimate \eqref{Wave-Linfty}, the decomposition \eqref{u-decomposition}, \eqref{Damped-Linear-L1-Lr} with $r=q$ and the inequality $|a+b|^q\lesssim |a|^q+|b|^q$ also give
\begin{align}\label{Huygens-v-nonlinear-basic}
\|v^{\nlin}(t,\cdot)\|_{L^\infty}\lesssim\varepsilon^qH_q(t)+\int_0^t\|u^{\nlin}(\tau,\cdot)\|_{L^q}^q\,\mathrm{d}\tau.
\end{align}
The first term in \eqref{Huygens-u-nonlinear-basic} generates, through \eqref{Huygens-v-nonlinear-basic}, an additional profile of size $\varepsilon^{pq}\langle t\rangle^{q+1}$. We therefore introduce
\begin{align}\label{Huygens-Phi}
\Phi_{\mathrm{H},\varepsilon}(t):=\varepsilon^qH_q(t)+\varepsilon^{pq}\langle t\rangle^{q+1}
\end{align}
and
\begin{align}\label{Huygens-Xi}
\Xi_{\mathrm{H},\varepsilon}(t):=\int_0^t\langle\tau\rangle^{\frac1q}[\Phi_{\mathrm{H},\varepsilon}(\tau)]^p\,\mathrm{d}\tau.
\end{align}
We measure the remaining feedback by
\begin{align}\label{Huygens-Q}
\ml{Q}_{\mathrm{H},\varepsilon}(T):=\sup_{0<t\leqslant T}\frac{1}{\Phi_{\mathrm{H},\varepsilon}(t)}\int_0^t[\Xi_{\mathrm{H},\varepsilon}(\tau)]^q\,\mathrm{d}\tau.
\end{align}
Since
\begin{align}\label{ADD}
	\int_0^t(\varepsilon^p\langle\tau\rangle)^q\,\mathrm{d}\tau\lesssim\varepsilon^{pq}\langle t\rangle^{q+1},
\end{align}
this contribution is already included in $\Phi_{\mathrm{H},\varepsilon}$ and therefore does not enter $\ml{Q}_{\mathrm{H},\varepsilon}(T)$ as a perturbative term.

\begin{lemma}[Huygens-type bootstrap criterion]\label{Lemma-Huygens-Bootstrap}
There exists $\delta_{\mathrm{H}}>0$, independent of $\varepsilon$ and $T$, such that
\begin{align*}
\ml{Q}_{\mathrm{H},\varepsilon}(T)\leqslant\delta_{\mathrm{H}}
\end{align*}
implies
\begin{align*}
\|v^{\nlin}(t,\cdot)\|_{L^\infty}&\lesssim\Phi_{\mathrm{H},\varepsilon}(t),\\
\|u^{\nlin}(t,\cdot)\|_{L^q}&\lesssim\varepsilon^p\langle t\rangle+\Xi_{\mathrm{H},\varepsilon}(t)
\end{align*}
for $0\leqslant t<S$. Consequently, $T_\varepsilon> T$.
\end{lemma}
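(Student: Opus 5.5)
We run a continuity argument parallel to Lemma~\ref{Lemma-Bootstrap}, with the bootstrap quantity now being $v^{\nlin}$ rather than $v$. Fix $M\geqslant2$, to be chosen below, and let $T_1\in[0,S)$ be maximal such that
\begin{align*}
\|v^{\nlin}(t,\cdot)\|_{L^\infty}\leqslant M\Phi_{\mathrm{H},\varepsilon}(t)\ \ \mbox{for all}\ \ t\in[0,T_1].
\end{align*}
The interval is nonempty. Indeed, $v^{\nlin}(0,\cdot)=0$ and $v^{\nlin}$ is continuous in time with values in $L^\infty$, because $v\in\ml{C}([0,S),H^1)$ and $H^1\hookrightarrow L^\infty$. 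Moreover, $\Phi_{\mathrm{H},\varepsilon}(t)>0$ for $t>0$, and $\Phi_{\mathrm{H},\varepsilon}(0)=\varepsilon^{pq}>0$.

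On $[0,T_1]$ we insert the bootstrap bound into \eqref{Huygens-u-nonlinear-basic}:
\begin{align*}
\|u^{\nlin}(t,\cdot)\|_{L^q}\leqslant C\varepsilon^p\langle t\rangle+CM^p\,\Xi_{\mathrm{H},\varepsilon}(t).
\end{align*}
We then raise this to the power $q$ and substitute into \eqref{Huygens-v-nonlinear-basic}, using $|a+b|^q\lesssim|a|^q+|b|^q$. The contribution of $\varepsilon^p\langle\tau\rangle$ is controlled by \eqref{ADD} and is bounded by $C\varepsilon^{pq}\langle t\rangle^{q+1}\leqslant C\Phi_{\mathrm{H},\varepsilon}(t)$. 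The term $\varepsilon^qH_q(t)$ is trivially bounded by $\Phi_{\mathrm{H},\varepsilon}(t)$. The remaining term is $CM^{pq}\int_0^t[\Xi_{\mathrm{H},\varepsilon}]^q\,\mathrm{d}\tau$, and by the definition \eqref{Huygens-Q} it is at most $CM^{pq}\ml{Q}_{\mathrm{H},\varepsilon}(T)\Phi_{\mathrm{H},\varepsilon}(t)$ for $0<t\leqslant T_1<T$. Hence
\begin{align*}
\|v^{\nlin}(t,\cdot)\|_{L^\infty}\leqslant\big(C_0+C_1M^{pq}\delta_{\mathrm{H}}\big)\Phi_{\mathrm{H},\varepsilon}(t).
\end{align*}
Choose $M\geqslant\max\{2,4C_0\}$ and then $\delta_{\mathrm{H}}$ with $C_1M^{pq}\delta_{\mathrm{H}}\leqslant C_0$. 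This gives the strictly improved bound $\frac M2\Phi_{\mathrm{H},\varepsilon}$. By continuity of both sides, $T_1=S$. The bound on $u^{\nlin}$ then follows from the displayed estimate with $M$ now fixed.

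For the conclusion $T_\varepsilon>T$ we invoke Lemma~\ref{Lemma-Continuation}. By \eqref{v-decomposition}, Lemma~\ref{Lemma-Huygens-Free-Wave} (or simply \eqref{Wave-Linfty}) and the bound just proved,
\begin{align*}
\sup_{0\leqslant t<S}\|v(t,\cdot)\|_{L^\infty}\lesssim\varepsilon\big(\|v_0\|_{H^1}+\|v_1\|_{L^1}\big)+\sup_{0\leqslant t\leqslant T}\Phi_{\mathrm{H},\varepsilon}(t),
\end{align*}
which is finite since $\Phi_{\mathrm{H},\varepsilon}$ is continuous on $[0,T]$. The continuation criterion therefore gives $T_\varepsilon>T$.

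The main point requiring care is the justification of \eqref{Huygens-u-nonlinear-basic} and \eqref{Huygens-v-nonlinear-basic}, which are stated before the lemma and which I take as given. Beyond that, the only delicate point is the bookkeeping that keeps the $\varepsilon^p\langle t\rangle$ forcing term out of the perturbative functional. The argument must use \eqref{ADD} so that this term is absorbed into the profile with an $M$-independent constant. Otherwise it would carry a factor $M^{pq}$, and the improvement step would fail. This holds because the $\varepsilon^p\langle t\rangle$ term in the bound for $u^{\nlin}$ comes from the linear free wave and does not depend on $M$.
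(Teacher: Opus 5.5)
Your proof is correct and follows essentially the same route as the paper: a continuity argument on $\|v^{\nlin}(t,\cdot)\|_{L^\infty}\leqslant M\Phi_{\mathrm{H},\varepsilon}(t)$, with the $\varepsilon^p\langle t\rangle$ forcing absorbed via \eqref{ADD} into the $M$-independent constant $C_0$, the same choices $M\geqslant\max\{2,4C_0\}$ and $C_1M^{pq}\delta_{\mathrm{H}}\leqslant C_0$, and the conclusion $T_\varepsilon>T$ via Lemma~\ref{Lemma-Continuation}. Your extra justification of nonemptiness (continuity of $v^{\nlin}$ in $L^\infty$ and $\Phi_{\mathrm{H},\varepsilon}(0)=\varepsilon^{pq}>0$), and your observation that \eqref{Wave-Linfty} alone suffices for the final $L^\infty$ bound on $v^{\lin}$, are minor refinements of the paper's argument.
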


\begin{proof}
Fix $M>1$, and let
	$[0,T_1]\subset[0,S)$ be a maximal interval on which
	\begin{align*}
		\|v^{\nlin}(t,\cdot)\|_{L^\infty}\leqslant M\Phi_{\mathrm{H},\varepsilon}(t).
	\end{align*}
	Such an interval is nonempty since $v^{\nlin}(0,\cdot)=0$.
	From \eqref{Huygens-u-nonlinear-basic}, we obtain
	\begin{align*}
		\|u^{\nlin}(t,\cdot)\|_{L^q}\leqslant C\varepsilon^p\langle t\rangle+CM^p\Xi_{\mathrm{H},\varepsilon}(t)
	\end{align*}
	for every $t\in[0,T_1]$. Substituting this estimate into
	\eqref{Huygens-v-nonlinear-basic}, and using \eqref{ADD}, we derive
	\begin{align*}
		\|v^{\nlin}(t,\cdot)\|_{L^\infty}&\leqslant C_0\Phi_{\mathrm{H},\varepsilon}(t)+C_1M^{pq}\int_0^t[\Xi_{\mathrm{H},\varepsilon}(\tau)]^q\,\mathrm{d}\tau\\
		&\leqslant	\big(C_0+C_1M^{pq}\ml{Q}_{\mathrm{H},\varepsilon}(T)\big)\Phi_{\mathrm{H},\varepsilon}(t).
	\end{align*}
	Choose $M:=\max\{2,4C_0\}$, and then choose $\delta_{\mathrm{H}}>0$ such that $C_1M^{pq}\delta_{\mathrm{H}}\leqslant C_0$. If $\ml{Q}_{\mathrm{H},\varepsilon}(T)\leqslant \delta_{\mathrm{H}}$, then
	\begin{align*}
		\|v^{\nlin}(t,\cdot)\|_{L^\infty}\leqslant2C_0\Phi_{\mathrm{H},\varepsilon}(t)\leqslant\frac{M}{2}\Phi_{\mathrm{H},\varepsilon}(t).
	\end{align*}
	This strict improvement implies that $T_1=S$. Since $M$ is fixed independently of $\varepsilon$ and $T$, we conclude our desired estimates for every $t\in[0,S)$. Finally, by \eqref{v-decomposition} and \eqref{Huygens-Ls} with $s=\infty$, we have
	\begin{align*}
		\|v(t,\cdot)\|_{L^\infty}\lesssim\varepsilon+\Phi_{\mathrm{H},\varepsilon}(t).
	\end{align*}
	Since $S\leqslant T<\infty$, it follows that $\sup_{0\leqslant t<S}\|v(t,\cdot)\|_{L^\infty}<\infty$. Lemma~\ref{Lemma-Continuation} therefore yields	$T_\varepsilon>T$.
\end{proof}

\section{Proofs of the lifespan lower bounds}\label{Section-Proof-Lower}
We first estimate the bootstrap functional $\ml{Q}_\varepsilon(T)$ and prove Theorem~\ref{Thm-Lower-Lifespan}. We then treat the Huygens-type bootstrap functional and prove Theorem~\ref{Thm-Huygens-Lifespan}.

\subsection{Proof of the generic lower bound}

\begin{prop}[Estimate for the generic bootstrap functional]
	\label{Prop-Q-Estimates}
	Let $p,q>1$ and $0<\varepsilon\leqslant 1$.
	
	\begin{enumerate}
		\item If $q>3$, then
		\begin{align*}\mathcal Q_\varepsilon(T)\lesssim\varepsilon^{pq-1}\langle T\rangle^{q+2}.
		\end{align*}
		\item If $q=3$ and $\varepsilon^2\ln\langle T\rangle\leqslant 1$, then
		\begin{align*}
			\mathcal Q_\varepsilon(T)\lesssim\varepsilon^{3p-1}\langle T\rangle^5.
		\end{align*}
		\item Let $1<q<3$ and set $T_0:=\varepsilon^{-\frac{2(q-1)}{3-q}}$.
		If $T\leqslant T_0$, then
		\begin{align*}
			\mathcal Q_\varepsilon(T)\lesssim\varepsilon^{pq-1}\langle T\rangle^{q+2}.
		\end{align*}
		If $T\geqslant T_0$, then
		\begin{align*}
			\mathcal Q_\varepsilon(T)\lesssim\varepsilon^{pq-1}T_0^{q+2}+\varepsilon^{q(pq-1)}\langle T\rangle^{q+2+\frac{3-q}{2}(pq-1)}.
		\end{align*}
	\end{enumerate}
	The implicit constants are independent of $\varepsilon$ and $T$.
\end{prop}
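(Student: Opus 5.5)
The proof is a direct computation with the explicit profiles $\Phi_\varepsilon$ and $\Psi_\varepsilon$, using the asymptotics \eqref{Hq-asymptotics} of $H_q$. In each case we first find a pointwise upper bound for $\Phi_\varepsilon(\tau)$. Next we integrate to bound $\Psi_\varepsilon(\tau)$, and then integrate $\Psi_\varepsilon^q$. Finally, since we need an upper bound on a ratio, we use the trivial lower bound $\Phi_\varepsilon(t)\geqslant\varepsilon$. Throughout, $t\leqslant T$ and all powers of $\langle\tau\rangle$ that appear have exponents above $-1$. The integrals therefore behave like their upper endpoint, via $\int_0^t\langle\tau\rangle^{a}\,\mathrm d\tau\lesssim\langle t\rangle^{a+1}$ for $a>-1$.

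\emph{Cases $q>3$ and $q=3$.} For $q>3$ we have $H_q\lesssim1$, so $\Phi_\varepsilon\lesssim\varepsilon$. This gives $\Psi_\varepsilon(\tau)\lesssim\varepsilon^p\langle\tau\rangle^{1+\frac1q}$, and hence
\begin{align*}
\int_0^t\Psi_\varepsilon^q\,\mathrm d\tau\lesssim\varepsilon^{pq}\langle t\rangle^{q+2}.
\end{align*}
Dividing by $\Phi_\varepsilon(t)\geqslant\varepsilon$ yields $\varepsilon^{pq-1}\langle t\rangle^{q+2}$, which is monotone in $t$; taking the supremum over $t\leqslant T$ gives the claim. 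For $q=3$ the same argument goes through: the hypothesis $\varepsilon^2\ln\langle T\rangle\leqslant1$ gives $\varepsilon^3H_3(\tau)\lesssim\varepsilon$ on $[0,T]$, so again $\Phi_\varepsilon\lesssim\varepsilon$, and the exponent becomes $q+2=5$.

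\emph{Case $1<q<3$.} Here $\Phi_\varepsilon(\tau)\approx\varepsilon+\varepsilon^q\langle\tau\rangle^{\frac{3-q}{2}}$. The two terms balance exactly when $\langle\tau\rangle^{\frac{3-q}{2}}=\varepsilon^{1-q}$, that is, at $\tau\approx T_0$.

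If $T\leqslant T_0$, then $\Phi_\varepsilon\lesssim\varepsilon$ on $[0,T]$, and the computation from the case $q>3$ applies verbatim.

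If $T\geqslant T_0$, we split $\Phi_\varepsilon^p\lesssim\varepsilon^p+\varepsilon^{pq}\langle\tau\rangle^{\frac{p(3-q)}2}$. This gives
\begin{align*}
\Psi_\varepsilon(\tau)\lesssim\varepsilon^p\langle\tau\rangle^{1+\frac1q}+\varepsilon^{pq}\langle\tau\rangle^{1+\frac1q+\frac{p(3-q)}{2}},
\end{align*}
so $\Psi_\varepsilon^q$ is bounded by the sum of the $q$-th powers of these two terms. We now distinguish two ranges of $t$.
\begin{itemize}
\item For $t\leqslant T_0$ we argue exactly as before and obtain the bound $\varepsilon^{pq-1}T_0^{q+2}$.
\item For $t\geqslant T_0$ we use the better lower bound $\Phi_\varepsilon(t)\gtrsim\varepsilon^q\langle t\rangle^{\frac{3-q}2}$. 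The first term of $\Psi_\varepsilon^q$ contributes
\begin{align*}
\varepsilon^{pq}\langle t\rangle^{q+2}\big/\varepsilon^q\langle t\rangle^{\frac{3-q}2}.
\end{align*}
The second term contributes
\begin{align*}
\varepsilon^{pq^2-q}\langle t\rangle^{q+2+\frac{pq(3-q)}{2}-\frac{3-q}{2}}=\varepsilon^{q(pq-1)}\langle t\rangle^{q+2+\frac{3-q}{2}(pq-1)},
\end{align*}
which is exactly the second term of the statement.
\end{itemize}

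The main obstacle, though a mild one, is the first term in the range $t\geqslant T_0$. We must check that it is dominated by one of the two stated terms. Its ratio to the second term equals $(\varepsilon^{1-q}\langle t\rangle^{-\frac{3-q}{2}})^{pq}$, which is at most $1$ precisely when $t\geqslant T_0$. So it is absorbed into the second term, and this also shows that the split at $T_0$ is the right one. A more careful alternative is to split the $\tau$-integral at $T_0$ and use whichever term of $\Phi_\varepsilon$ dominates on each piece; this yields the same two contributions. Taking the supremum over $t\leqslant T$ completes the estimate, with constants depending only on $p$ and $q$.
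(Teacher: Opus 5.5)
Your proof is correct and follows essentially the same computation as the paper: bound $\Phi_\varepsilon\lesssim\varepsilon$ when $q\geqslant3$ or $t\leqslant T_0$, integrate the power profiles, and for $t\geqslant T_0$ divide by $\Phi_\varepsilon(t)\gtrsim\varepsilon^q\langle t\rangle^{\frac{3-q}{2}}$ using $\varepsilon^{q-1}T_0^{\frac{3-q}{2}}=1$. The only cosmetic difference is where the early-time contribution is absorbed. The paper splits the $\tau$-integral at $T_0$, absorbs it already inside $\Psi_\varepsilon$, and keeps $\int_0^{T_0}\Psi_\varepsilon^q$ as the separate term $\varepsilon^{pq-1}T_0^{q+2}$. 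You absorb it at the level of the ratio instead, which incidentally shows that for $t\geqslant T_0$ the second term alone suffices.
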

\begin{proof}
The three cases reflect the behavior of the linear contribution $\varepsilon^qH_q(t)$ in the comparison function $\Phi_\varepsilon(t)$. Since $0<\varepsilon\leqslant1$, if $q>3$,  then $H_q$ is uniformly bounded and hence $\Phi_\varepsilon(t)\approx\varepsilon$. It follows that
\begin{align*}
\Psi_\varepsilon(t)\lesssim\varepsilon^p\langle t\rangle^{1+\frac1q}.
\end{align*}
Therefore,
\begin{align*}
	\mathcal Q_\varepsilon(T)\lesssim\varepsilon^{-1}\sup_{0<t\leqslant T}\int_0^t\varepsilon^{pq}\langle\tau\rangle^{q+1}\,\mathrm{d}\tau\lesssim\varepsilon^{pq-1}\langle T\rangle^{q+2}.
\end{align*}

At the borderline $q=3$, the condition $\varepsilon^2\ln\langle T\rangle\leqslant1$ again ensures $\Phi_\varepsilon(t)\approx\varepsilon$ for $0\leqslant t\leqslant T$. We then obtain
\begin{align*}
\Psi_\varepsilon(t)\lesssim\varepsilon^p\langle t\rangle^{\frac43}.
\end{align*}
Consequently,
\begin{align*}
	\mathcal Q_\varepsilon(T)\lesssim\varepsilon^{-1}\sup_{0<t\leqslant T}\int_0^t\varepsilon^{3p}\langle\tau\rangle^4\,\mathrm{d}\tau \lesssim\varepsilon^{3p-1}\langle T\rangle^5.
\end{align*}

Assume finally that $1<q<3$ and set $T_0:=\varepsilon^{-\frac{2(q-1)}{3-q}}$. In this case, we obtain
\begin{align*}
\Phi_\varepsilon(t)\approx\varepsilon+\varepsilon^q\langle t\rangle^{\frac{3-q}{2}},
\end{align*}
and the definition of $T_0$ gives $\varepsilon^{q-1}T_0^{\frac{3-q}{2}}=1$. Hence, $\Phi_\varepsilon(t)\approx\varepsilon$ on $[0,T_0]$, which yields
\begin{align*}
\Psi_\varepsilon(t)\lesssim\varepsilon^p\langle t\rangle^{1+\frac1q}\ \ \mbox{for}\ \ 0\leqslant t\leqslant T_0.
\end{align*}
Therefore, if $T\leqslant T_0$, then
\begin{align*}
	\mathcal Q_\varepsilon(T)\lesssim\varepsilon^{pq-1}\langle T\rangle^{q+2}.
\end{align*} 
For $t\geqslant T_0$, the second term in $\Phi_\varepsilon(t)$ dominates and
\begin{align*}
\Phi_\varepsilon(t)\approx\varepsilon^q\langle t\rangle^{\frac{3-q}{2}}.
\end{align*}
Splitting the integral defining $\Psi_\varepsilon$ at $T_0$, and using the identity above to compare the contribution from $[0,T_0]$ with the late-time term, gives
\begin{align*}
\Psi_\varepsilon(t)\lesssim\varepsilon^{pq}\langle t\rangle^{1+\frac1q+\frac{p(3-q)}{2}}\ \ \mbox{for}\ \ t\geqslant T_0.
\end{align*}
It concludes
\begin{align*}
\frac{1}{\Phi_\varepsilon(t)}\int_{T_0}^t[\Psi_\varepsilon(\tau)]^q\,\mathrm{d}\tau\lesssim\varepsilon^{q(pq-1)}\langle t\rangle^{q+2+\frac{3-q}{2}(pq-1)}.
\end{align*}
For $t\geqslant T_0$, using $\Phi_\varepsilon(t)\geqslant\varepsilon$, we also have
\begin{align*}
	\frac{1}{\Phi_\varepsilon(t)}\int_0^{T_0}[\Psi_\varepsilon(\tau)]^q\,\mathrm{d}\tau\lesssim \varepsilon^{pq-1}T_0^{q+2}.
\end{align*} 
Combining the preceding two estimates and taking the supremum in \eqref{Qepsilon}, we obtain, whenever $T\geqslant T_0$,
\begin{align*}
\mathcal Q_\varepsilon(T)\lesssim\varepsilon^{pq-1}T_0^{q+2}+\varepsilon^{q(pq-1)}\langle T\rangle^{q+2+\frac{3-q}{2}(pq-1)}.
\end{align*}
This completes the proof.
\end{proof}

\begin{proof}[Proof of Theorem~\ref{Thm-Lower-Lifespan}]
We distinguish the three cases according to the value of $q$. For all the time scales chosen below, we may assume that $T\geqslant1$ by taking $\varepsilon>0$ sufficiently small.

First, let $q>3$ and set $T=c\,\varepsilon^{-\frac{pq-1}{q+2}}$. By Proposition~\ref{Prop-Q-Estimates}, we derive
\begin{align*}
\mathcal Q_\varepsilon(T)\lesssim\varepsilon^{pq-1}T^{q+2}=c^{q+2}.
\end{align*}
Thus, \eqref{Bootstrap-Q-condition} holds for sufficiently small $c>0$.

Let $q=3$ and set $T=c\,\varepsilon^{-\frac{3p-1}{5}}$. Then, $\varepsilon^2\ln\langle T\rangle\to0$ as $\varepsilon\downarrow0$, so the condition required for Proposition~\ref{Prop-Q-Estimates} holds for sufficiently small $\varepsilon$. Consequently, Proposition~\ref{Prop-Q-Estimates} gives
\begin{align*}
\mathcal Q_\varepsilon(T)\lesssim\varepsilon^{3p-1}T^5=c^5.
\end{align*}
Again, \eqref{Bootstrap-Q-condition} follows after choosing $c>0$ sufficiently small.

Finally, let $1<q<3$. The identity
\begin{align*}
p\leqslant P_*(q)\ \ \Leftrightarrow\ \ \frac{pq-1}{q+2}\leqslant\frac{2(q-1)}{3-q}
\end{align*}
shows that, when $1<p<P_*(q)$, the choice $T=c\,\varepsilon^{-\frac{pq-1}{q+2}}$ satisfies $T\leqslant T_0$ for sufficiently small $\varepsilon$. At $p=P_*(q)$, the same conclusion holds provided that $0<c\leqslant1$. Hence, Proposition~\ref{Prop-Q-Estimates} yields
\begin{align*}
\mathcal Q_\varepsilon(T)\lesssim\varepsilon^{pq-1}T^{q+2}=c^{q+2}.	
\end{align*}
Assume now that $p>P_*(q)$. One derives
\begin{align}\label{Late-Scale-Beyond-T0}
\frac{q(pq-1)}{q+2+\frac{3-q}{2}(pq-1)}>\frac{2(q-1)}{3-q},
\end{align}
which is equivalent to $p>P_*(q)$. We choose
\begin{align*}
T=c\,\varepsilon^{-\frac{q(pq-1)}{q+2+\frac{3-q}{2}(pq-1)}}=c\,\varepsilon^{-\frac{2q(pq-1)}{pq(3-q)+3q+1}}.
\end{align*}
By \eqref{Late-Scale-Beyond-T0}, one has $T\geqslant T_0$ for sufficiently small $\varepsilon$. Moreover, we know $\varepsilon^{pq-1}T_0^{q+2}\to0$ as $\varepsilon\downarrow 0$, because $\frac{pq-1}{q+2}>\frac{2(q-1)}{3-q}$.
Therefore, Proposition~\ref{Prop-Q-Estimates} gives
\begin{align*}
	\mathcal Q_\varepsilon(T)&\lesssim\varepsilon^{pq-1}T_0^{q+2}+\varepsilon^{q(pq-1)}T^{q+2+\frac{3-q}{2}(pq-1)}\\
	&=o(1)+c^{q+2+\frac{3-q}{2}(pq-1)}.
\end{align*}
Choosing first $c>0$ sufficiently small and then $\varepsilon>0$ sufficiently small, we obtain \eqref{Bootstrap-Q-condition}.

In all cases, Lemma~\ref{Lemma-Bootstrap} and Lemma~\ref{Lemma-Continuation} yield $T_\varepsilon>T$. This eventually completes the proof of Theorem~\ref{Thm-Lower-Lifespan}.
\end{proof}

\subsection{Proof of the Huygens-type lower bound}

The two terms in $\Phi_{\mathrm{H},\varepsilon}$ represent the contributions generated by the linear damped component and the localized homogeneous free wave, respectively. For $1<q<3$, put
\begin{align*}
S_\varepsilon:=\varepsilon^{-\frac{2q(p-1)}{3q-1}},
\end{align*}
so that $\varepsilon^qS_\varepsilon^{\frac{3-q}{2}}=\varepsilon^{pq}S_\varepsilon^{q+1}$. When $q\geqslant3$, the analogous transition has the power $S_\varepsilon=\varepsilon^{-\frac{q(p-1)}{q+1}}$, with the harmless logarithmic modification at $q=3$.

\begin{prop}[Huygens-type bootstrap estimate]\label{Prop-Huygens-Q}
	Let $\delta_{\mathrm{H}}>0$ be as in
	Lemma~\ref{Lemma-Huygens-Bootstrap}. There exists a constant
	$c_0>0$ such that, for every $c\in(0,c_0]$, one has
	\begin{align}\label{QQ}
		\ml{Q}_{\mathrm{H},\varepsilon}(T)\leqslant\delta_{\mathrm{H}}
	\end{align}
	for all sufficiently small $\varepsilon>0$, where
	\begin{align*}
		T=\begin{cases}
			\displaystyle c\,\varepsilon^{-\frac{pq(pq-1)}{pq(q+1)+1}}&\mbox{if}\ \ q\geqslant3,\ \ \mbox{or}\ \ 1<q<3\ \ \mbox{and}\ \ 1<p\leqslant P_{\mathrm{H}}(q),\\
			\displaystyle
			c\,\varepsilon^{-\frac{2q(pq-1)}{pq(3-q)+3q+1}}&\mbox{if}\ \ 1<q<3\ \ \mbox{and}\ \ p>P_{\mathrm{H}}(q).
		\end{cases}
	\end{align*}
\end{prop}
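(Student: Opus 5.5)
The plan is to estimate $\ml{Q}_{\mathrm{H},\varepsilon}(T)$ by splitting it along the two terms of $\Phi_{\mathrm{H},\varepsilon}$ rather than tracking the transition time $S_\varepsilon$ explicitly. Write $\Phi_{\mathrm{H},\varepsilon}=\Phi_1+\Phi_2$ with $\Phi_1(t):=\varepsilon^qH_q(t)$ and $\Phi_2(t):=\varepsilon^{pq}\langle t\rangle^{q+1}$. Since $(a+b)^p\lesssim a^p+b^p$, we get $\Xi_{\mathrm{H},\varepsilon}\lesssim\Xi_1+\Xi_2$ with
\begin{align*}
\Xi_j(t):=\int_0^t\langle\tau\rangle^{\frac1q}[\Phi_j(\tau)]^p\,\mathrm{d}\tau .
\end{align*}
Using $(a+b)^q\lesssim a^q+b^q$ and $\Phi_{\mathrm{H},\varepsilon}\geqslant\Phi_j$ for each $j$, this gives
\begin{align*}
\ml{Q}_{\mathrm{H},\varepsilon}(T)\lesssim\sup_{0<t\leqslant T}\frac{1}{\Phi_1(t)}\int_0^t[\Xi_1(\tau)]^q\,\mathrm{d}\tau+\sup_{0<t\leqslant T}\frac{1}{\Phi_2(t)}\int_0^t[\Xi_2(\tau)]^q\,\mathrm{d}\tau=:I_1(T)+I_2(T).
\end{align*}
Each piece is then a power-type computation, so the argument reduces to two elementary bounds and an exponent comparison.

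\emph{The free-wave piece $I_2$.} Directly, $\Xi_2(t)\lesssim\varepsilon^{p^2q}\langle t\rangle^{1+\frac1q+p(q+1)}$. Integrating the $q$-th power and dividing by $\Phi_2$ gives
\begin{align*}
I_2(T)\lesssim\varepsilon^{pq(pq-1)}\langle T\rangle^{pq(q+1)+1}.
\end{align*}
With $T=c\,\varepsilon^{-\frac{pq(pq-1)}{pq(q+1)+1}}$ this equals $c^{pq(q+1)+1}$. For any smaller $T$ it is smaller still.

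\emph{The damped-profile piece $I_1$.} Near $t=0$ one has $H_q(t)\approx t$. Hence $\Xi_1\lesssim\varepsilon^{pq}t^{p+1}$ and the ratio behaves like $\varepsilon^{q(pq-1)}t^{q(p+1)}$, which is harmless. For $1<q<3$, the computation in Proposition~\ref{Prop-Q-Estimates} (late-time regime) gives
\begin{align*}
I_1(T)\lesssim\varepsilon^{q(pq-1)}\langle T\rangle^{q+2+\frac{3-q}{2}(pq-1)}.
\end{align*}
This is at most a power of $c$ once $T\leqslant c\,\varepsilon^{-\frac{2q(pq-1)}{pq(3-q)+3q+1}}$. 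For $q>3$, $H_q$ is bounded, so $\Xi_1\lesssim\varepsilon^{pq}\langle t\rangle^{1+\frac1q}$ and $I_1\lesssim\varepsilon^{q(pq-1)}\langle T\rangle^{q+2}$. For $q=3$ the same holds up to a factor $\ln^{3p}\langle T\rangle$, which is absorbed by $\varepsilon^\delta$ as in Remark~\ref{Rem-Borderline-q3}.

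\emph{Choice of $T$ and the exponent comparison.} In each case $T$ is the minimum of the two admissible scales. For $q\geqslant3$ the Huygens scale is always the smaller one. Indeed,
\begin{align*}
\frac{pq(pq-1)}{pq(q+1)+1}\leqslant\frac{q(pq-1)}{q+2}\ \ \Longleftrightarrow\ \ pq^2+1\geqslant2p ,
\end{align*}
and the right-hand inequality holds for $q\geqslant3$ (indeed for $q\geqslant\sqrt2$). Hence $I_1=o(1)$ at the Huygens scale. For $1<q<3$ I would verify by cross-multiplication that
\begin{align*}
\frac{pq(pq-1)}{pq(q+1)+1}\leqslant\frac{2q(pq-1)}{pq(3-q)+3q+1}\ \ \Longleftrightarrow\ \ G_q(p)\leqslant0\ \ \Longleftrightarrow\ \ p\leqslant P_{\mathrm H}(q).
\end{align*}
Dividing out $q(pq-1)>0$ and expanding, the cross-multiplied inequality becomes $p[pq(3-q)+3q+1]\leqslant2[pq(q+1)+1]$, which is exactly $G_q(p)\leqslant0$.

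Thus for $p\leqslant P_{\mathrm H}(q)$ the Huygens time is the minimum. Then $I_2\lesssim c^{\,pq(q+1)+1}$, and $I_1$ is either a smaller power of $c$ (at equality of scales) or $o(1)$. For $p>P_{\mathrm H}(q)$ the roles reverse: $I_1\lesssim c^{\kappa}$ with $\kappa:=q+2+\frac{3-q}{2}(pq-1)$, and $I_2=o(1)$. Choosing $c_0$ so that these powers of $c$ are below $\delta_{\mathrm H}/2$, and then taking $\varepsilon$ small, yields \eqref{QQ}. Small $\varepsilon$ also guarantees $T\geqslant1$.

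I expect the main obstacle to be the exponent bookkeeping: confirming that the $I_1$-type bound from Proposition~\ref{Prop-Q-Estimates} transfers verbatim when the normalizing denominator is $\Phi_1$ alone (including the early regime $t\leqslant1$), and checking the cross-multiplication that reduces the scale comparison exactly to the sign of $G_q(p)$.
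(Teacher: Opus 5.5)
Your proposal is correct, but it is organized differently from the paper's proof. The paper argues by dominance. For $1<q<3$ with $p\leqslant P_{\mathrm H}(q)$, and likewise for $q\geqslant3$, it keeps only $\Phi_{\mathrm{H},\varepsilon}\geqslant\varepsilon^{pq}\langle t\rangle^{q+1}$ in the denominator. This produces a cross term ($\Xi_1$ measured against $\Phi_2$), equal to $\varepsilon^{pq(q-1)}\langle t\rangle^{1+\frac{pq(3-q)}{2}}$ when $1<q<3$, and its smallness at the Huygens scale is again equivalent to $G_q(p)\leqslant0$. For $p>P_{\mathrm H}(q)$ the paper uses the transition time $S_\varepsilon$ to show $\Phi_{\mathrm{H},\varepsilon}\approx\varepsilon^q\langle t\rangle^{\frac{3-q}{2}}$ on $[1,T]$. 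It then absorbs $\Xi_2$ into $\Xi_1$ and handles $(0,1]$ separately through $\Phi_{\mathrm{H},\varepsilon}\gtrsim\varepsilon^{pq}$.

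Your diagonal pairing, justified by $\Phi_{\mathrm{H},\varepsilon}\geqslant\Phi_j>0$ for $t>0$, removes the cross terms and $S_\varepsilon$ entirely. It reduces everything to $T=c\,\varepsilon^{-\min\{\alpha_{\mathrm H},\beta\}}$, where $\beta=\frac{2q(pq-1)}{pq(3-q)+3q+1}$ for $1<q<3$ and $\beta=\frac{q(pq-1)}{q+2}$ for $q\geqslant3$. Your cross-multiplication $\alpha_{\mathrm H}\leqslant\beta\Leftrightarrow G_q(p)\leqslant0$ is correct, and so is the strict inequality $pq^2+1>2p$ for $q\geqslant3$. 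That strictness supplies the slack $\varepsilon^{\delta}$ that absorbs the logarithm at $q=3$.

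The step you flagged also goes through. Because $\Phi_1$ has no $\varepsilon$-term, the late-time computation is valid on all of $[1,T]$ without splitting at $T_0$. On $(0,1]$ the ratio is $\lesssim\varepsilon^{q(pq-1)}$, since $\Xi_1\approx\varepsilon^{pq}t^{p+1}$ while $\Phi_1\approx\varepsilon^q t$.

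Your route gives uniformity across all $q$ and a clean reading of the lifespan as the minimum of the scales of two decoupled mechanisms. The paper's route additionally identifies which term of $\Phi_{\mathrm{H},\varepsilon}$ actually dominates at the chosen time. This reflects the fact that $\alpha_{\mathrm H}$, $\beta$ and $\frac{2q(p-1)}{3q-1}$ all cross exactly at $p=P_{\mathrm H}(q)$.
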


\begin{proof}
We record the relevant power estimates; they follow from \eqref{Hq-asymptotics}, \eqref{Huygens-Phi} and \eqref{Huygens-Xi}. The inequality $(a+b)^p\lesssim a^p+b^p$ gives, for $1<q<3$,
\begin{align}\label{Huygens-Xi-bound}
\Xi_{\mathrm{H},\varepsilon}(t)\lesssim\varepsilon^{pq}\langle t\rangle^{1+\frac1q+\frac{p(3-q)}2}+\varepsilon^{p^2q}\langle t\rangle^{1+\frac1q+p(q+1)}.
\end{align}
The same estimate holds for $q>3$ with the factor $\langle t\rangle^{\frac{p(3-q)}{2}}$ removed from the first term; for $q=3$ it is replaced by a power of $\ln(\mathrm{e}+t)$, which is negligible on every polynomial scale considered below.

Assume first that $1<q<3$ and $1<p\leqslant P_{\mathrm{H}}(q)$. By the definition of $P_{\mathrm{H}}(q)$, we have
\begin{align*}
	\frac{pq(pq-1)}{pq(q+1)+1}\geqslant\frac{2q(p-1)}{3q-1}.
\end{align*}
The inequality is strict when $p<P_{\mathrm{H}}(q)$, whereas equality holds at $p=P_{\mathrm{H}}(q)$. By the definition of $\Phi_{\mathrm{H},\varepsilon}$, we obtain
\begin{align*}
	\Phi_{\mathrm{H},\varepsilon}(t) \geqslant\varepsilon^{pq}\langle t\rangle^{q+1}\ \ \mbox{for all}\ \ t\geqslant0.
\end{align*}
Therefore, using \eqref{Huygens-Xi-bound} and the inequality $(a+b)^q\lesssim a^q+b^q$, we obtain
\begin{align}
	\frac{1}{\Phi_{\mathrm{H},\varepsilon}(t)}\int_0^t[\Xi_{\mathrm{H},\varepsilon}(\tau)]^q\,\mathrm{d}\tau&\lesssim \varepsilon^{pq(q-1)}\langle t\rangle^{1+\frac{pq(3-q)}{2}}+\varepsilon^{pq(pq-1)}\langle t\rangle^{1+pq(q+1)}\label{Huygens-Q-first-regime}
\end{align}
for every $0<t\leqslant T$. Since both powers of $\langle t\rangle$ on the right-hand side of \eqref{Huygens-Q-first-regime} are positive, it is sufficient to evaluate them at $t=T$. At the selected time $T$, the second term is a positive power of $c$. The first term tends to zero as $\varepsilon\downarrow0$ when $p<P_{\mathrm{H}}(q)$, while at $p=P_{\mathrm{H}}(q)$ it is also a positive power of $c$. Therefore, it holds after choosing $c>0$ sufficiently small and then $\varepsilon>0$ sufficiently small.

Assume now that $p>P_{\mathrm{H}}(q)$. In this case, one has
\begin{align*}
	\frac{2q(pq-1)}{pq(3-q)+3q+1}<\frac{2q(p-1)}{3q-1}.
\end{align*}
Indeed, after multiplying by the positive denominators, this inequality
is equivalent to
\begin{align*}
	q(3-q)p^2-(2q^2-q-1)p-2>0,
\end{align*}
which yields from the definition of $P_{\mathrm{H}}(q)$ in
\eqref{PH-quadratic}. Therefore, for the second time scale in
Proposition~\ref{Prop-Huygens-Q}, one has
\begin{align}\label{T-before-Huygens-transition}
	T\leqslant S_\varepsilon
\end{align}
for all sufficiently small $\varepsilon>0$, provided that $0<c\leqslant1$.

We first control the short-time interval. For $0<t\leqslant1$, the definition of $H_q$ gives $H_q(t)\lesssim t$. Hence, using $p>1$ and $0<\varepsilon\leqslant1$, we obtain
\begin{align*}
	\Phi_{\mathrm{H},\varepsilon}(t)&\lesssim\varepsilon^qt+\varepsilon^{pq}\lesssim\varepsilon^q,\\
	\Phi_{\mathrm{H},\varepsilon}(t)&\gtrsim\varepsilon^{pq}.
\end{align*}
It follows from \eqref{Huygens-Xi} that $\Xi_{\mathrm{H},\varepsilon}(t)\lesssim\varepsilon^{pq}t$ for $0<t\leqslant1$. Consequently, for $0<t\leqslant1$, one has
\begin{align}\label{Huygens-Q-short-time}
	\frac{1}{\Phi_{\mathrm{H},\varepsilon}(t)}\int_0^t[\Xi_{\mathrm{H},\varepsilon}(\tau)]^q\,\mathrm{d}\tau&\lesssim\varepsilon^{-pq}\int_0^t\varepsilon^{pq^2}\tau^q\,\mathrm{d}\tau\lesssim\varepsilon^{pq(q-1)}.
\end{align}
Thus, the contribution of the short-time interval tends to zero as $\varepsilon\downarrow0$. We now consider $1\leqslant t\leqslant T$. Since \eqref{T-before-Huygens-transition} holds, the definition of $S_\varepsilon$ gives
\begin{align*}
	\varepsilon^{pq}\langle t\rangle^{q+1} \lesssim\varepsilon^q\langle t\rangle^{\frac{3-q}{2}}.
\end{align*}
Moreover, for $t\geqslant1$, one derives $H_q(t)\approx\langle t\rangle^{\frac{3-q}{2}}$. We arrive at
\begin{align}\label{Huygens-Phi-first-dominant}
	\Phi_{\mathrm{H},\varepsilon}(t)\approx\varepsilon^q\langle t\rangle^{\frac{3-q}{2}}\ \ \mbox{for}\ \ 1\leqslant t\leqslant T.
\end{align}
On the same interval, the second term on the right-hand side of \eqref{Huygens-Xi-bound} is controlled by the first one. Hence, after also absorbing the integral over $[0,1]$, we obtain
\begin{align*}
	\Xi_{\mathrm{H},\varepsilon}(t)\lesssim\varepsilon^{pq}\langle t\rangle^{1+\frac1q+\frac{p(3-q)}2}\ \ \mbox{for}\ \ 1\leqslant t\leqslant T.
\end{align*}
Raising this estimate to the $q$th power, integrating in time and using \eqref{Huygens-Phi-first-dominant}, we find
\begin{align*}
	\frac{1}{\Phi_{\mathrm{H},\varepsilon}(t)}\int_0^t[\Xi_{\mathrm{H},\varepsilon}(\tau)]^q\,\mathrm{d}\tau\lesssim\varepsilon^{q(pq-1)}\langle t\rangle^{q+2+\frac{3-q}{2}(pq-1)}.
\end{align*}
At the selected time $T$, the right-hand side is bounded by  $C\,c^{q+2+\frac{3-q}{2}(pq-1)}$. Combining this estimate with \eqref{Huygens-Q-short-time}, we obtain
\begin{align*}
	\ml{Q}_{\mathrm{H},\varepsilon}(T)\lesssim\varepsilon^{pq(q-1)}+c^{q+2+\frac{3-q}{2}(pq-1)}.
\end{align*}
Choosing first $c>0$ sufficiently small and then $\varepsilon>0$ sufficiently small gives \eqref{QQ}.
This proves Proposition~\ref{Prop-Huygens-Q} when $1<q<3$ and $p>P_{\mathrm H}(q)$.

Finally, let $q>3$. Since $H_q$ is uniformly bounded, using
\begin{align*}
	\Phi_{\mathrm{H},\varepsilon}(t)\geqslant\varepsilon^{pq}\langle t\rangle^{q+1},
\end{align*}
the same calculation as in the first case, now without the factor involving $\frac{3-q}{2}$, gives
\begin{align*}
	\frac{1}{\Phi_{\mathrm{H},\varepsilon}(t)}
	\int_0^t[\Xi_{\mathrm{H},\varepsilon}(\tau)]^q\,\mathrm{d}\tau \lesssim
	\varepsilon^{pq(q-1)}\langle t\rangle+\varepsilon^{pq(pq-1)}\langle t\rangle^{1+pq(q+1)}.
\end{align*}
Consequently, at $T=c\,\varepsilon^{-\alpha_{\mathrm{H}}}$ with $\alpha_{\mathrm{H}}:=\frac{pq(pq-1)}{pq(q+1)+1}$,
we obtain
\begin{align*}
	\ml{Q}_{\mathrm{H},\varepsilon}(T)\lesssim c\,\varepsilon^{pq(q-1)-\alpha_{\mathrm{H}}}+c^{1+pq(q+1)}.
\end{align*}
Since $q>3$, we have $\alpha_{\mathrm{H}}<pq<pq(q-1)$. Thus, the first term tends to zero as $\varepsilon\downarrow0$. Choosing first $c>0$ sufficiently small and then
$\varepsilon>0$ sufficiently small, we obtain \eqref{QQ}. This proves Proposition~\ref{Prop-Huygens-Q} when $q>3$.

It remains to consider the borderline case $q=3$. In this case, $H_3(t)=\ln\langle t\rangle$, and hence
\begin{align*}
	\Phi_{\mathrm{H},\varepsilon}(t)=\varepsilon^3\ln\langle t\rangle+\varepsilon^{3p}\langle t\rangle^4 \geqslant\varepsilon^{3p}\langle t\rangle^4.
\end{align*}
Using $(a+b)^p\lesssim a^p+b^p$ in \eqref{Huygens-Xi}, we obtain
\begin{align}
	\Xi_{\mathrm{H},\varepsilon}(t)&\lesssim\varepsilon^{3p}\int_0^t\langle\tau\rangle^{\frac13}[\ln\langle\tau\rangle]^p\,\mathrm{d}\tau+\varepsilon^{3p^2}\int_0^t\langle\tau\rangle^{4p+\frac13}\,\mathrm{d}\tau\notag\\[0.3em]
	&\label{Huygens-Xi-q3}
	\lesssim\varepsilon^{3p}\langle t\rangle^{\frac43}[\ln(\mathrm{e}+t)]^p+\varepsilon^{3p^2}\langle t\rangle^{4p+\frac43}.
\end{align}
According to the inequality $(a+b)^3\lesssim a^3+b^3$, it follows from \eqref{Huygens-Xi-q3} that
\begin{align*}
	\frac{1}{\Phi_{\mathrm{H},\varepsilon}(t)}\int_0^t[\Xi_{\mathrm{H},\varepsilon}(\tau)]^3\,\mathrm{d}\tau&\lesssim\frac{1}{\Phi_{\mathrm{H},\varepsilon}(t)}\big(\varepsilon^{9p}\langle t\rangle^5[\ln(\mathrm{e}+t)]^{3p}+\varepsilon^{9p^2}\langle t\rangle^{12p+5}\big)\\
	&\lesssim\varepsilon^{6p}\langle t\rangle[\ln(\mathrm{e}+t)]^{3p}+\varepsilon^{3p(3p-1)}\langle t\rangle^{12p+1}.
\end{align*}
We now choose $T=c\,\varepsilon^{-\frac{3p(3p-1)}{12p+1}}$. For sufficiently small $\varepsilon$, one has $T\geqslant1$. Taking the supremum over $0<t\leqslant T$ in the preceding estimate, we obtain
\begin{align*}
	\ml{Q}_{\mathrm{H},\varepsilon}(T)\lesssim c\,\varepsilon^{6p-\frac{3p(3p-1)}{12p+1}}\left[\ln\left(\mathrm{e}+c\,\varepsilon^{-\frac{3p(3p-1)}{12p+1}}\right)\right]^{3p}+c^{12p+1}.
\end{align*}
Note that
\begin{align*}
	6p-\frac{3p(3p-1)}{12p+1}=\frac{9p(7p+1)}{12p+1}>0.
\end{align*}
Consequently, for every fixed $c>0$, the following limit holds:
\begin{align*}
	\varepsilon^{6p-\frac{3p(3p-1)}{12p+1}}\left[\ln\left(\mathrm{e}+c\,\varepsilon^{-\frac{3p(3p-1)}{12p+1}}\right)\right]^{3p}\to0
\end{align*}
as $\varepsilon\downarrow0$. Choosing first $c>0$ sufficiently small and then $\varepsilon>0$ sufficiently small, we obtain
\eqref{QQ}. This proves Proposition~\ref{Prop-Huygens-Q} for $q=3$.
\end{proof}

\begin{proof}[Proof of Theorem~\ref{Thm-Huygens-Lifespan}]
	For each time scale in \eqref{Huygens-Lifespan}, Proposition~\ref{Prop-Huygens-Q} verifies the smallness condition in Lemma~\ref{Lemma-Huygens-Bootstrap}, provided that $c>0$ and then $\varepsilon>0$ are chosen sufficiently small. Consequently, Lemma~\ref{Lemma-Huygens-Bootstrap} yields $T_\varepsilon>T$. This completes the proof.
\end{proof}

\section{Sharpness of the generic lower bound}\label{Section-Upper-Comparison}

Under the compact support assumptions of Theorem~\ref{Thm-Lower-Lifespan} and the positivity conditions \eqref{Positive-Data}, the upper lifespan estimates in \cite{Kita-Kusaba=2022} (see also \cite{Chen-Reissig=2021}) give
\begin{align*}
	T_\varepsilon\lesssim\varepsilon^{-\frac1{F(p,q)}},
\end{align*}
where, after removing the redundant term $\frac{2+p^{-1}}{pq-1}$, we set
\begin{align*}
F(p,q):=\max\left\{\frac{q+1}{pq-1}-\frac12,\frac{q+2}{pq-1},\frac{p+1}{pq-1}-\frac12,\frac{2p+1}{pq-1}-1\right\}.
\end{align*}
Remark that the omitted term is redundant because of $\frac{q+2}{pq-1}-\frac{2+p^{-1}}{pq-1}=\frac{q-p^{-1}}{pq-1}>0$ for every $p,q>1$. The term $\frac{q+2}{pq-1}$ always dominates the first one. If $q\geqslant2$, it also dominates the third and fourth terms because
\begin{align*}
\frac{q+2}{pq-1}-\left(\frac{p+1}{pq-1}-\frac12\right)&=\frac{p(q-2)+2q+1}{2(pq-1)}\geqslant0,\\[0.3em]
\frac{q+2}{pq-1}-\left(\frac{2p+1}{pq-1}-1\right)&=\frac{p(q-2)+q}{pq-1}\geqslant0.
\end{align*}
Hence, $T_\varepsilon\lesssim\varepsilon^{-\frac{pq-1}{q+2}}$ holds for $q\geqslant2$.

Let $1<q<2$. The second entry in $F(p,q)$ dominates the fourth one if and only if $p\leqslant\frac{q}{2-q}$.
Moreover, it dominates the third one if and only if $p\leqslant\frac{2q+1}{2-q}$.
Since $\frac{q}{2-q}<\frac{2q+1}{2-q}$, the condition $p\leqslant\frac{q}{2-q}$ ensures that the second entry dominates both the third and fourth ones. Furthermore,
\begin{align*}
	\frac{q}{2-q}-P_*(q)=\frac{(q-1)^2(q+2)}{q(3-q)(2-q)}>0.
\end{align*}
Therefore, $T_\varepsilon\lesssim\varepsilon^{-\frac{pq-1}{q+2}}$ holds throughout the range $1<p\leqslant P_*(q)$. Combining these upper bounds with Theorem~\ref{Thm-Lower-Lifespan} proves Corollary~\ref{Cor-Sharp-Lifespan}.

\begin{remark}[The remaining gap]
When $1<q<3$ and $p>P_*(q)$, Theorem~\ref{Thm-Lower-Lifespan} gives
\begin{align*}
T_\varepsilon\gtrsim\varepsilon^{-\frac{2q(pq-1)}{pq(3-q)+3q+1}},
\end{align*}
which does not, in general, match the currently available upper estimates. Proposition~\ref{Prop-Optimal-Lmr} shows that optimizing the exponents $m$ and $r$ within the preceding global $L^m-L^r$ framework does not improve the resulting time-growth exponent. Any further improvement would therefore require additional spatial information or a different analytic mechanism.
\end{remark}

\section*{Acknowledgments}

Wenhui Chen is supported in part by the National Natural Science Foundation of China (grant No. 12301270) and the Guangdong Basic and Applied Basic Research Foundation (grant No. 2025A1515010240). I would like to thank Alessandro Palmieri for suggesting the topic investigated in this paper.

\end{document}